\documentclass[a4paper, 11pt]{amsart}

\usepackage{amsfonts,amsmath,amssymb,amsxtra,amsthm,tikz}
\usepackage{mathrsfs}
\usepackage{times}
\usepackage{anysize}
\usepackage{graphicx}
\marginsize{1in}{1in}{1in}{1in}

\input xy
\xyoption{all}
\usepackage[all,knot]{xy}

\newcommand{\Sub}{\operatorname{Sub}}
\newcommand{\Lk}{\operatorname{Lk}}
\newcommand{\St}{\operatorname{St}}

\newcommand{\ma}{\operatorname{\bf{a}}}

\newcommand{\mc}{\operatorname{\bf{c}}}

\newcommand{\me}{\operatorname{\bf{e}}}
\newcommand{\mf}{\operatorname{\bf{f}}}
\newcommand{\mg}{\operatorname{\bf{g}}}

\newcommand{\mo}{\operatorname{\bf{o}}}
\newcommand{\mpp}{\operatorname{\bf{p}}}
\newcommand{\mq}{\operatorname{\bf{q}}}

\newcommand{\ms}{\operatorname{\bf{s}}}

\newcommand{\mw}{\operatorname{\bf{w}}}

\newcommand{\MQ}{\operatorname{\bf{Q}}}
\newcommand{\MRR}{\operatorname{\bf{R}}}

\newcommand{\MT}{\operatorname{\bf{T}}}
\newcommand{\MUU}{\operatorname{\bf{U}}}

\newtheorem{theorem}{Theorem}[section]
\newtheorem{corollary}[theorem]{Corollary}
\newtheorem{lemma}[theorem]{Lemma}

\newtheorem{problem}[theorem]{Problem}

\newtheorem{conjecture}[theorem]{Conjecture}
\newtheorem{fact}[theorem]{Fact}

\theoremstyle{definition}
\newtheorem{definition}[theorem]{Definition}
\newtheorem{example}[theorem]{Example}
\newtheorem{remark}[theorem]{Remark}

\sloppy
\sloppy

\title{Subword complexes and edge subdivisions} 
\author{Mikhail Gorsky} 

\address{Steklov Mathematical Institute,  
8 Gubkina Street, Moscow, Russia 119991.}
\address{Universit\'e Paris Diderot -- Paris 7, UFR de
Math\'ematiques, Case 7012, Institut de Math\'ematiques de Jussieu -- Paris Rive Gauche, UMR 7586 du CNRS,
B\^at. Sophie Germain, 75205 Paris Cedex 13, France}
\email{mgorsky@math.jussieu.fr}

\begin{document}

\begin{abstract}
For a finite Coxeter group, a subword complex is a simplicial complex associated with a pair $(\MQ, \pi),$ where $\MQ$ is a word in the alphabet of simple reflections, $\pi$ is a group element. We discuss the transformations of such a complex induced by braid moves of the word $\MQ.$ We show that under certain conditions, this transformation is a composition of edge subdivisions and inverse edge subdivisions. In such a case, we describe how the $H-$ and the $\gamma-$polynomials change under this operation. This case includes all braid moves for groups with simply-laced Coxeter diagrams.
\end{abstract}

\maketitle


\section{Introduction}
Subword complexes were introduced by A.~Knutson and E.~Miller in \cite{KM1} in the context of Schubert polynomials and matrix Schubert varieties. It was soon realized that they are interesting from the point of view of Coxeter combinatorics. A subword complex is associated with a pair $(\MQ, \pi),$ where $\MQ$ is a word in the alphabet of simple reflections and $\pi$ is an element of a Coxeter group $W.$ The simplices in the complex correspond to the subwords in $\MQ$ whose complements contain reduced expressions of $\pi.$ The exchange axiom arises as the transition between two adjacent maximal simplices. In \cite{KM1}, subword complexes were shown to be vertex-decomposable and, therefore, shellable. This provides a new proof (and a new interpretation) of the Cohen-Macaulayness for matrix Schubert varieties and also for ordinary Schubert varieties, cf. \cite{KM1}. Using the shellability, in \cite{KM2}, Knutson and Miller proved that an arbitrary subword complex is homeomorphic to a sphere or to a ball. They also give the sufficient and necessary condition on $(\MQ, \pi)$ for which the complex is spherical. 

For a spherical subword complex it is natural to ask, whether it is polar dual to some simple polytope. The answer to the general question is still unknown, but in some cases such polytopes are constructed. One of the constructions is due to V.~Pilaud and C.~Stump \cite{PS} whose polytopes are called {\it brick}. As polytopes dual to the subword complexes there arise such classical polytopes as generalized associahedra and certain cyclic polytopes, cf. \cite{CLS}\cite{PS}. For a certain class of words, subword complexes turn out to be isomorphic to cluster complexes introduced by S.~Fomin and A.~Zelevinsky \cite{FZ}. Moreover, some important objects from theory of cluster algebras, such as $c-$vectors, can be interpreted via natural geometric realizations of these subword complexes and their fans \cite{CLS}. One class of subword complexes is called multi-cluster complexes. A number of properties of the category of representations of the corresponding quiver have a natural interpretation in terms of combinatorics of these complexes, cf. \cite{CLS}. 

In this article, we discuss the following question: given two words $\MQ_1$ and $\MQ_2$ expressing the same group element, what can we say about the relation between the corresponding subword complexes $\Delta_1$ and $\Delta_2?$ Theorem \ref{widetildeiso} and Corollary \ref{delta2iso} provide a partial answer to this question. For $\MQ$ and $\MQ'$ related by exactly one braid move, we show which simplices should be removed from $\Delta_1$ and which should be added, in order to obtain $\Delta_2.$ Unfortunately, in general, a lot of simplices might be counted several times. Therefore, this result, while being constructible, cannot be applied directly to calculations of enumerative polynomials and does not reflect the picture quite precisely. Nonetheless, under certain conditions, we prove that this operation is nothing but a composition of edge subdivisions and inverse edge subdivisions, which can be simply described. This is done in Theorem \ref{braidstellar1} and \ref{braidstellarmij3}. In particular, this holds for any pair of words related by a braid move in the simply-laced case. In Theorem \ref{braid2trunc}, we give the dual result in terms of polytopes; the edge subdivisions are replaced by truncations of faces of codimension $2,$ or simply $2-$truncations.

Edge subdivisions and $2-$truncations attracted the attention recently because of their applications to enumerative polynomials. To any simplicial complex homeomorphic to a sphere, one associates the so-called $\gamma-$polynomial whose coefficients are certain linear combinations of the numbers $f_k$ of simplices of dimension $k.$ In \cite{Ga}, Gal conjectured that all the coefficients of the $\gamma-$polynomial of an arbitrary flag spherical simplicial complex are non-negative. N.~Aisbett proved this conjecture for any complex which can be obtained from the nerve complex of a cube by a sequence of edge subdivisions. She also proved a generalization concerning the realization of these $\gamma-$polynomials as $f-$polynomials of some other complexes, cf. \cite{A} and references therein. The same result, but in the dual terms of $2-$truncations, has been shown by Volodin, cf. \cite{BV}\cite{V1}\cite{V2}. The class of polytopes which can be obtained from the cube of a fixed dimension by a sequence of $2-$truncations turns out to be very interesting. In particular, it contains all flag nestohedra, graph-associahedra and generalized associahedra of types $ABCD;$ details can be found in \cite{BV}. Our main results give a new interpretation of $2-$truncations and edge subdivisions in very natural combinatorial terms; indeed, we see that, in some cases, these operations correspond just to braid moves in the Coxeter systems. In the upcoming sequel \cite{Go2}, we will apply this to the subword complexes $\Delta(\mc\mw_o; w_o),$ where $\mc$ is a reduced expression of a Coxeter element, $\mw_o$ is an arbitrary reduced expression of the longest element $w_o.$ Our results also enable us to define various partial orders on sets of reduced expressions of any word in a Coxeter system, cf. the end of Section 5. 
 
The paper is organized as follows. In Section 2 we recall the notion of edge subdivisions of simplicial complexes and of $2-$truncations of simple polytopes. We give also the necessary material on enumerative polynomials. Section 3 and Section 4 contain the introductions to finite Coxeter systems and to subword complexes, respectively. In Section 5, we formulate and prove all new results.

This is a part of my Ph.D. project at Steklov Mathematical Institute. I am very grateful to my doctoral adviser Prof. Victor M. Buchstaber for the inspiration in the work and for his support and patience. Many thanks to Jean-Philippe Labb\'e, Vincent Pilaud and Salvatore Stella for explaining me the nature of the subword complexes. The work was supported by DIM RDM-IdF of the R\'{e}gion \^{I}le-de-France.

\section{Edge subdivisions and 2-truncations}
Given a simplex $\sigma$ in a simplicial complex $X,$ the {\it link} and the {\it star} of $\sigma$ are the following subcomplexes:
$$\Lk_X(\sigma) = \left\{\rho \in X | \sigma \cup \rho \in X, \sigma \cap \rho = \emptyset\right\};$$
$$\St_X(\sigma) = \left\{\rho \in X | \sigma \cup \rho \in X\right\}.$$

Let us define an {\it edge subdivision} of a simplicial complex, following \cite{Ga}. This operation is also called a $1-${\it stellar subdivision}.

\begin{definition} 
Let $X$ be a simplicial complex. Let $\eta = \left\{s, t\right\}$ be an edge. Define $\Sub_{\eta}(X)$ to be
a simplicial complex constructed from $X$ by bisection of all simplices containing $\eta.$ In other words,
let $r$ be any letter not in the vertex set of $X.$ Then $S \cup \left\{r\right\}$ is the vertex set of $\Sub_{\eta}(X)$ and
$$\Sub_{\eta}(X) = \left\{\sigma|\eta \not\subset \sigma \in X\right\} \cup \left\{\sigma \cup \left\{r\right\}, \sigma \cup \left\{r, s\right\}, \sigma \cup \left\{r, t\right\} | \sigma \in \Lk_X(\eta)\right\} = $$
$$= \left\{\sigma|\eta \not\subset \sigma \in X\right\} \cup \left\{\sigma \cup \left\{r\right\} | \sigma \in \partial \St_X(\eta) \right\}.$$
We say that $\Sub_{\eta}(X)$ is a subdivision of $X$ along $\eta.$
\end{definition}

We will use the following natural generalization of this transformation.

\begin{definition}
Let $X$ be a simplicial complex. Let $\eta = \left\{s, t\right\}$ be an edge. Define $\Sub^k_{\eta}(X)$ to be
a simplicial complex constructed from $X$ by dividing all simplices containing $\eta$ in $n$ parts. In other words, let $r_1,\ldots,r_k$ be any $k$ letters not in the vertex set $S$ of $X.$ Then $S \cup \left\{r_1,\ldots,r_k\right\}$ is the vertex set of $\Sub^k_{\eta}(X)$ and
$$\Sub^k_{\eta}(X) = \left\{\sigma|\eta \subset \sigma \in X\right\} \cup \left\{\sigma \cup \left\{r_i\right\}, \sigma \cup \left\{r_{i-1}, r_i\right\}, \sigma \cup \left\{r_k, t\right\} | i \in \left\{1,\ldots,k\right\}, \sigma \in \Lk_X(\eta)\right\},$$
where we pose $r_0 = s.$
We say that $\Sub^k_{\eta}(X)$ is a $k-$subdivision of $X$ along $\eta.$
\end{definition}

The operation of the edge $k-$subdivision is just a composition of $k$ subsequent edge subdivisions of $X,$ where the $i-$th subdivision is made along the edge $\left\{r_{i-1}, t\right\},$ with the new vertex $r_i.$

A convex polytope of dimension $n$ is said to be {\it simple} if each of its vertices is contained in precisely $n$ facets. A polytope $P$ is simple if and only if its polar dual polytope $P^*$ is {\it simplicial}, i.e. each face of $P^*$ is a simplex.
The boundary of a simplicial $n-$dimensional polytope is a simplicial complex
of dimension $(n - 1).$ For a simple polytope $P,$ we shall denote by $K_P$ the
boundary complex $\partial P^*$ of the dual polytope. It coincides with the nerve of the
covering of $\partial P$ by the facets. That is, the vertices of $K_P$ are the facets of $P,$ and a
set of vertices spans a simplex whenever the intersection of the corresponding facets
is nonempty. We say that $K_P$ is the {\it nerve complex} of $P,$ or that $K_P$ is {\it polar dual} to $P.$
For simple polytopes, there is a dual operation to the edge subdivision: 
this is the truncation of a face of codimension $2,$ or simply the $2-${\it truncation}. 

\begin{definition}
Let $G$ be a face of codimension $2$ of a simple (combinatorial) polytope $P.$ Let $K_P = \partial P^*$ be its nerve complex and $J \in P*$ be the face dual to $G.$ We say that the polytope $\widetilde{P},$ such that $K_{\widetilde{P}} = \Sub_J(K_P),$ is the truncation of $P$ at $G.$ Such a polytope $\widetilde{P}$ exists and is unique, up to combinatorial isomorphism.
\end{definition}

Assume that $P$ is $n-$dimensional. Geometrically, the polytope $\widetilde{P}$ can be obtained from a realization of $P$ by intersecting the latter with a new half-space $H,$ such that the intersection of the $(n-1)$-dimensional plane $h = \partial H$ with $\partial P$ is precisely the link of $G$ in $\partial P.$ It means that the facets $\widetilde{P}$ are precisely the facets of $P$ and there is a one new facet $K$ isomorphic to $I \times G,$ and such that $\Lk_{\widetilde{P}}(K) = \Lk_P(G).$ Here $I$ is the closed interval $[0,1].$ More detailed treatment of $2-$truncations can be found in \cite{BP,BV}.

We see that, for the nerve complex of a simple polytope $P,$ an edge $k-$subdivision is dual to a composition of $k$ subsequent $2-$truncations of $P,$ where:
\begin{itemize}
\item{} the first truncation is made at a certain face $G,$ the new facet $G_1$ is isomorphic to $I \times G;$
\item{} the $(i+1)-$th truncation is made at a facet $\left\{1\right\} \times G$ of $G_{i},$ the new facet $G_{i+1}$ is again isomorphic to $I \times G.$ 
\end{itemize}

A simple polytope $P$ is {\it flag} if any set of pairwise intersecting facets $F_{i_1},\ldots, F_{i_k}$ has a non-empty intersection. Dually, a simplicial complex $X$ is {\it flag}, if any clique $T$ in the vertex set of $X$ (i.e., any two distinct vertices of $T$ are joined by an edge) is a face of $X.$ It is evident that a simple polytope $P$ is flag if and only if so is its nerve complex $K_P.$ We have the following fact relating edge subdivisions and $2-$truncations with the flagness property.

\begin{fact} [{\cite[Proposition~2.4.6]{Ga}}]
\begin{itemize}
\item[(i)]  If a simplicial complex is flag, then so is any its edge subdivision;
\item[(ii)] If a simple polytope is flag, then so is any its $2-$truncation.
\end{itemize}
\end{fact}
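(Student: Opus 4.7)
The plan is to prove (i) directly via a clique analysis in $\Sub_\eta(X)$, and then deduce (ii) from (i) using the duality between $2$-truncations and edge subdivisions of nerve complexes explained just above the Fact.

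For (i), fix a flag simplicial complex $X$ with vertex set $S$, an edge $\eta = \{s, t\} \in X$, and let $r \notin S$ be the new vertex created in $\Sub_\eta(X)$. The first step is to read off from the definition precisely which edges exist in $\Sub_\eta(X)$: the edges not involving $r$ are the edges of $X$ except $\eta$ itself (since every simplex containing $\eta$ has been removed), and the edges containing $r$ are $\{r, s\}$, $\{r, t\}$, together with $\{r, v\}$ for each vertex $v$ of $\Lk_X(\eta)$, i.e.\ each $v \in S \setminus \{s, t\}$ adjacent in $X$ to both $s$ and $t$.

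I would then take an arbitrary clique $T$ in the $1$-skeleton of $\Sub_\eta(X)$ and split into two cases. If $r \notin T$, then $T$ is a clique in $X$ that does not contain $\eta$, so flagness of $X$ makes $T$ a simplex of $X$ not containing $\eta$, hence a simplex of $\Sub_\eta(X)$. If $r \in T$, write $T = T' \cup \{r\}$ and set $T'' := T' \setminus \{s, t\}$; by the edge description above, each element of $T''$ is adjacent in $X$ to both $s$ and $t$, and the pairs inside $T''$ are edges of $X$, so $T'' \cup \{s, t\}$ is a clique in $X$. Flagness of $X$ then gives $T'' \cup \{s, t\} \in X$, i.e.\ $T'' \in \Lk_X(\eta)$. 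Since $T'$ cannot contain both $s$ and $t$ (the edge $\eta$ is no longer present), $T'$ is one of $T''$, $T'' \cup \{s\}$ or $T'' \cup \{t\}$, which realises $T$ as one of the three prescribed types of simplex $\sigma \cup \{r\}$, $\sigma \cup \{r, s\}$, $\sigma \cup \{r, t\}$ in $\Sub_\eta(X)$.

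For (ii), the discussion preceding the Fact records that a simple polytope $P$ is flag if and only if its nerve complex $K_P$ is flag, and that the $2$-truncation $\widetilde P$ at a codimension-$2$ face $G$ satisfies $K_{\widetilde P} = \Sub_J(K_P)$ with $J$ the edge of $K_P$ dual to $G$. So (ii) is a direct consequence of (i) applied to $K_P$ along the edge $J$. The main obstacle, and the only place where flagness of $X$ is actually used, is the verification that the candidate $T'' \cup \{s, t\}$ is a simplex of $X$ in the case $r \in T$; once the edge set of $\Sub_\eta(X)$ is correctly catalogued, everything else is bookkeeping.
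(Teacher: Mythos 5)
Your proof is correct and complete. Note, however, that the paper itself contains no proof of this statement: it is quoted as a known result from Gal \cite{Ga} (Proposition~2.4.6), so there is no in-paper argument to compare against. Your direct verification is the standard one and works: the catalogue of edges of $\Sub_{\eta}(X)$ (edges of $X$ other than $\eta$, plus $\{r,s\}$, $\{r,t\}$ and $\{r,v\}$ for $v$ a vertex of $\Lk_X(\eta)$) is accurate, the case split on whether a clique $T$ contains $r$ is exhaustive, flagness of $X$ is used exactly once to certify $T''\cup\{s,t\}\in X$, i.e.\ $T''\in\Lk_X(\eta)$, and the observation that no face of $\Sub_{\eta}(X)$ contains both $s$ and $t$ forces $T'$ to be one of $T''$, $T''\cup\{s\}$, $T''\cup\{t\}$, landing $T$ in the prescribed list of faces. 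Part (ii) then follows, as you say, from the identification $K_{\widetilde{P}}=\Sub_J(K_P)$ with $J$ the edge dual to the codimension-$2$ face $G$, together with the stated equivalence of flagness for $P$ and for its nerve complex $K_P$.
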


The $f-${\it{vector}} of an $(n - 1)-$dimensional simplicial complex
$X$ is $f(K) = (f_0, f_1,\ldots, f_{n-1}),$ where $f_i$ is the number of $i-$dimensional simplices
in $X.$ We also set here $f_{-1} = 1.$ The $h-${\it{vector}} $h(X) = (h_0, h_1,\ldots, h_n)$ is defined by the
identity
$$h_0 s^n + h_1 s^{n-1} + \ldots + h_n = (s - 1)^n + f_0 (s - 1)^{n-1} + \ldots + f_{n-1}.$$
We define the $H-${\it{polynomial}} of $X$ as the generating function in two variables:
$$H(X)(\alpha, t) = \sum\limits_{k=1}^n h_k \alpha^k t^{n-k}.$$
A {\it (simplicial) generalized homology sphere} of dimension $n$ is a simplicial
complex such that the link of any simplex $\sigma$ has the homology of a sphere of dimension
$(n - \dim\sigma).$ We will omit the superscript if not necessary.
It is known (under the name of the Dehn-Sommerville relations) that if $X$ is a generalized homological sphere, then $H(X)(\alpha, t)$ is symmetric. Therefore, it can be rewritten as a polynomial in $\alpha t$ and $(\alpha + t);$ let us denote the coefficients of this polynomial by $\gamma_0, \gamma_1,\ldots, \gamma_{[\frac{n}{2}]}:$
$$H(X)(\alpha, t) = \sum\limits_{k=1}^{[\frac{n}{2}]} \gamma_k (\alpha t)^k (\alpha + t)^{([\frac{n}{2}] - k)}.$$
We define the $\gamma-$polynomial as the generating function in one variable $\tau:$
$$\gamma(X)(\tau) = \sum\limits_{k=1}^{[\frac{n}{2}]} \gamma_k \tau^k.$$
We pose $H(\emptyset) = \gamma(\emptyset) = 0.$ 
Gal described how the $H-$ and the $\gamma-$polynomial change under edge subdivisions.

\begin{fact} {\cite[Proposition~2.4.3]{Ga}} \label{polysubdiv} 
\begin{itemize}
\item[(i)] $H(\Sub_{\eta}(X))(\alpha, t) = H(X)(\alpha, t) + \alpha tH(\Lk_X(\eta))(\alpha, t);$
\item[(ii)] If $X$ is a generalized homological sphere, then we have
$$\gamma(\Sub_{\eta}(X))(\tau) = H(X)(\tau) + \tau H(\Lk_X(\eta))(\tau).$$
\end{itemize}
\end{fact}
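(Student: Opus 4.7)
The plan is to derive part (i) by a direct computation with $f$-vectors, and then to deduce part (ii) from (i) by expanding all three $H$-polynomials in the $\gamma$-basis. The proof is essentially formal once the change in the $f$-vector has been identified.

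For (i), I would begin by describing how the simplex set changes. Every simplex $\tau \in X$ containing $\eta$ decomposes uniquely as $\tau = \sigma \sqcup \eta$ with $\sigma \in \Lk_X(\eta)$; such a $\tau$ is removed and replaced in $\Sub_\eta(X)$ by the three simplices $\sigma \cup \{r\}$, $\sigma \cup \{r,s\}$, $\sigma \cup \{r,t\}$ (of dimensions $\dim \sigma + 1$, $\dim \sigma + 2$, $\dim \sigma + 2$ respectively), whereas every simplex not containing $\eta$ is kept. Counting by dimension yields
$$f_j(\Sub_\eta(X)) = f_j(X) + f_{j-1}(\Lk_X(\eta)) + f_{j-2}(\Lk_X(\eta)) \quad (j \geq -1).$$
I would then package this via the homogenization $f_Y(u,v) := \sum_j f_j(Y)\, u^{j+1} v^{\dim Y - j}$. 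Since $\dim \Sub_\eta(X) = \dim X = n-1$ while $\dim \Lk_X(\eta) = n-3$, the $f$-vector identity takes the compact form
$$f_{\Sub_\eta(X)}(u,v) = f_X(u,v) + u(u+v)\, f_{\Lk_X(\eta)}(u,v).$$
A direct comparison of this with the defining identity of the $h$-vector in the excerpt shows $H(Y)(\alpha, t) = f_Y(\alpha, t-\alpha)$, and the substitution $u = \alpha$, $v = t - \alpha$ sends $u(u+v)$ to $\alpha t$, which proves (i).

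For (ii), I first invoke that edge subdivision is a piecewise-linear homeomorphism on geometric realizations and that links in generalized homology spheres are again generalized homology spheres; hence if $X$ is a homology sphere, so are $\Sub_\eta(X)$ and $\Lk_X(\eta)$. All three $H$-polynomials in (i) are then symmetric in $(\alpha, t)$ by Dehn--Sommerville and admit unique expansions in the $\gamma$-basis $\{(\alpha t)^k (\alpha+t)^{d-2k}\}_k$, with $d = n$ for $X$ and $\Sub_\eta(X)$ and $d = n-2$ for $\Lk_X(\eta)$. Plugging these into (i) and using
$$\alpha t \cdot (\alpha t)^k (\alpha + t)^{n-2-2k} = (\alpha t)^{k+1} (\alpha + t)^{n - 2(k+1)},$$
linear independence of the $\gamma$-basis forces the coefficient recursion $\gamma_k(\Sub_\eta(X)) = \gamma_k(X) + \gamma_{k-1}(\Lk_X(\eta))$, which, after multiplying by $\tau^k$ and summing, is exactly the identity of (ii). The main (and really only) obstacle is a careful tracking of degrees: $X$ and its subdivision live in degree $n$, the link in degree $n-2$, and the shift by $\alpha t$ in (i) precisely compensates for this offset and translates into the shift-by-one on the $\gamma$-index in (ii). Beyond this bookkeeping, the argument is a formal consequence of the local replacement rule for subdivided simplices together with Dehn--Sommerville symmetry.
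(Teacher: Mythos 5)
The paper does not actually prove this statement: it is quoted as a Fact from Gal \cite[Proposition~2.4.3]{Ga}, so there is no internal proof to compare with; your argument is correct and is essentially the standard one. The face count $f_j(\Sub_{\eta}(X)) = f_j(X) + f_{j-1}(\Lk_X(\eta)) + f_{j-2}(\Lk_X(\eta))$ is right (the retained simplices, those of the form $\sigma \cup \{r\}$, $\sigma \cup \{r,s\}$, $\sigma \cup \{r,t\}$, are pairwise disjoint families, and the simplices containing $\eta$ are exactly the removed ones), the identification $H(Y)(\alpha,t) = \sum_j f_j(Y)\,\alpha^{j+1}(t-\alpha)^{\dim Y - j}$ follows from the displayed definition of the $h$-vector by substituting $s = t/\alpha$ and homogenizing, and $u(u+v) \mapsto \alpha t$ under $u=\alpha$, $v=t-\alpha$, which gives (i). Two remarks. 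First, your derivation of (i) tacitly assumes $\dim \Lk_X(\eta) = \dim X - 2$, i.e.\ purity of $X$ around $\eta$; this is harmless in the context where the Fact is used (spherical subword complexes, which are pure), but it should be made explicit since the bookkeeping $f_{\Sub_{\eta}(X)} = f_X + u(u+v)f_{\Lk_X(\eta)}$ depends on it. Second, the identity you actually establish in (ii), namely $\gamma(\Sub_{\eta}(X))(\tau) = \gamma(X)(\tau) + \tau\,\gamma(\Lk_X(\eta))(\tau)$ via the coefficient recursion $\gamma_k(\Sub_{\eta}(X)) = \gamma_k(X) + \gamma_{k-1}(\Lk_X(\eta))$, is the intended statement: the occurrences of $H$ on the right-hand side of (ii) as printed are evidently typos for $\gamma$. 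Your deduction of it from (i) by expanding in the basis $(\alpha t)^k(\alpha+t)^{n-2k}$ and using its linear independence is sound; note also that the symmetry of $H(\Sub_{\eta}(X))$ already follows formally from (i) together with the symmetry of $H(X)$ and $H(\Lk_X(\eta))$, so the appeal to topological invariance of the generalized homology sphere property, while valid, is not strictly necessary.
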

This motivated Gal to formulate the following conjecture. 

\begin{conjecture} \label{galconj}
If $X$ is a generalized homology sphere, then $\gamma(X)$ has nonnegative coefficients.
\end{conjecture}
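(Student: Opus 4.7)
The plan is to attempt an inductive argument built around the edge-subdivision formula of Fact \ref{polysubdiv}. First, I would reinstate the flagness hypothesis on $X$, which the introduction indicates is the intended setting of Gal's original conjecture; it is also essential, since the statement fails in general without it. As a base case I would take the boundary complex of a cross-polytope, for which $h_k = \binom{n}{k}$ and hence $H(X)(\alpha,t) = (\alpha+t)^n$, so that $\gamma(X)(\tau) \equiv 1$.

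The inductive engine is Fact \ref{polysubdiv}(ii): for a generalized homology sphere $X$ and an edge $\eta$,
$$\gamma(\Sub_\eta(X))(\tau) = H(X)(\tau) + \tau H(\Lk_X(\eta))(\tau).$$
If $X$ is flag then so is $\Lk_X(\eta)$, and both are Cohen--Macaulay flag spheres with nonnegative $h$-vectors. Thus, as long as every complex appearing in a chosen decomposition is handled at the level of $H$-polynomials rather than $\gamma$-polynomials, the nonnegativity of the $\gamma$-polynomial of each edge subdivision is automatic. The proof would therefore reduce to the structural claim that every flag generalized homology sphere can be obtained from the boundary of some cross-polytope by a finite sequence of edge subdivisions, which is precisely the framework already developed by Aisbett \cite{A} and Volodin \cite{V1}\cite{V2}.

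The hard part --- indeed, the reason Conjecture \ref{galconj} remains a prominent open problem --- is that no such structural reduction is known, and the class of flag spheres reachable from a cross-polytope by iterated edge subdivisions is strictly smaller than the class of all flag spheres. The present paper nevertheless suggests a natural alternative attack: by Theorems \ref{braidstellar1} and \ref{braidstellarmij3}, many braid moves between subword complexes realize edge subdivisions, so one could try to represent a given flag sphere as a subword complex $\Delta(\MQ,\pi)$ and connect $\MQ$ to a word whose subword complex is a tractable base case through a sequence of controlled braid moves. The genuine obstruction I foresee lies in the first task, namely identifying which flag spheres admit a subword-complex realization at all; the braid-move step would then reduce, thanks to the machinery of Section~5, to a purely combinatorial problem about reduced expressions in a Coxeter system.
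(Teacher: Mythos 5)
There is no proof to compare against: the statement you were asked about is Conjecture \ref{galconj}, which the paper states as an open conjecture (Gal's conjecture) and does not prove. The paper only records the closure property that if the conjecture holds for $X$ and for $\Lk_X(\eta)$ then it holds for $\Sub_{\eta}(X)$ (via Fact \ref{polysubdiv}), and cites Aisbett and Volodin for the special class of complexes obtainable from the nerve complex of a cube by edge subdivisions (dually, $2$-truncations). Your proposal is likewise not a proof: you yourself identify the missing step, namely the structural claim that every flag generalized homology sphere can be reached from the boundary of a cross-polytope by a finite sequence of edge subdivisions, and no such reduction is known --- indeed the reachable class is believed (and in the dual polytopal setting known) to be strictly smaller than the class of all flag spheres, so the induction cannot be closed this way. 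Your base case computation ($\gamma \equiv 1$ for the cross-polytope boundary) and your observation that flagness must be reinstated for the conjecture to have a chance (the paper's conjecture environment omits it, although the introduction includes it) are both correct, and your suggested alternative route through subword complexes and braid moves is consistent with the spirit of Section 5, but it faces the same unresolved obstruction you name: not every flag sphere is known to admit a subword-complex realization, and even for those that do, the braid-move connectivity argument only transfers nonnegativity along moves satisfying the hypotheses of Theorems \ref{braidstellar1} and \ref{braidstellarmij3} and in the favorable direction (Corollary \ref{braidgal} requires condition ($A_2$)). In short, the gap is not a flaw in your reasoning so much as the fact that the statement is genuinely open; what you have written is a correct assessment of why the obvious inductive strategy does not suffice, not a proof.
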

This is a generalization of the important Charney-Davis conjecture \cite{CD} equivalent to the non-negativity of the highest coefficient of $\gamma(X).$ We see that if Conjecture \ref{galconj} holds for $X$ and $\Lk_X(\eta),$ then it also holds for $\Sub_{\eta}(X).$ 

\section{Coxeter groups}

We consider a {\it finite Coxeter group} $W$ acting on a $n-$dimensional euclidean space $V,$ that is, a finite group generated
by reflections. The set of reflections in $W$ is denoted by $R.$ The {\it Coxeter
arrangement} of $W$ is the collection of all reflecting hyperplanes. Its complement
in $V$ is a union of open polyhedral cones. Their closures are called {\it chambers}.
The Coxeter fan is the polyhedral fan formed by the chambers together with all
their faces. This fan is {\it complete} (its cones cover $V$) and {\it simplicial} (all cones are
simplicial), and we can assume without loss of generality that it is {\it essential} (the
intersection of all chambers is reduced to the origin). We fix an arbitrary chamber
$C$ which we call the {\it fundamental chamber}. The {\it simple reflections} of $W$ are the $n$
reflections orthogonal to the facet defining hyperplanes of $C.$ The set $S = \left\{s_1,\ldots,s_n\right\} \subset R$ of
simple reflections generates $W,$ so the choice of $S$ is equivalent to the choice of $C.$ The
pair $(W; S)$ forms a {\it Coxeter system}. For simple reflections $s_i, s_j \in S,$ denote by $m_{ij}$ the order
of the product $(s_i s_j)$ in W. A Coxeter group is simply-laced, if for any $i, j,$ we have $m_{ij} \in \left\{2, 3\right\}.$

The {\it length} $l(w)$ of an element $w \in W$ is the length of the smallest
expression of $w$ as a product of the generators in $S.$ An expression $w = w_1 w_2 \ldots w_p$
with $w_1,\ldots,w_p \in S$ is called {\it reduced} if $p = l(w).$ We denote by $w_o$ the unique longest element in $W,$ it is known to be unique.

We denote by $S^*$ the set of words on the alphabet $S,$ and by $\me$ the empty word. We can consider $S^*$ as the free monoid with the generating set $S$ and the concatenation as the operation.
To avoid confusion, we denote with a square letter $\mw$ the letter of the alphabet $S$
corresponding to the single reflection $s \in S.$ Similarly, we use a square letter like $\mw$
to denote a word of $S^*,$ and a normal letter like $w$ to denote its corresponding group
element in $W.$ For example, we write $\mw := \mw_1\ldots \mw_p$ meaning that the word $\mw \in  S^*$
is formed by the letters $\mw_1,\ldots,\mw_p,$ while we write $w := w_1 \ldots w_p$ meaning that
the element $w \in W$ is the product of the simple reflections $w_1,\ldots,w_p.$

On $S^*,$ there are two types of operations reflecting the group structure of $W.$ A {\it nil-move} removes two consecutive identical letters $\ms_i\ms_i$ from a word $\mw \in S^*,$ for some $i.$ If $\mw$ contains a subword $\ms_i\ms_j\ms_i\ms_j\ms_i\ldots$ of length $m_{ij},$ then there is a {\it braid-move} transforming $\mw$ into $\mw'$ by changing $\ms_i\ms_j\ms_i\ms_j\ms_i\ldots$ by the subword $\ms_j\ms_i\ms_j\ms_i\ms_j\ldots$ of the same length $m_{ij}.$ Note that neither a nil-move, nor a braid-move changes a group element $w \in W$ expressed by $\mw.$ Recall the {\bf Word Property} which holds for any Coxeter system $(W; S).$

\begin{fact} [{\cite[Theorem~3.3.1]{BB}}] \label{wordproperty}
\begin{itemize}
\item[(1)] Any expression $\mw$ for $w \in W$ can be transformed into a
reduced expression for $w$ by a sequence of nil-moves and braid-
moves.
\item[(2)] Every two reduced expressions for $w$ can be connected via a
sequence of braid-moves.
\end{itemize}
\end{fact}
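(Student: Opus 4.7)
The plan is to prove part (2) (Matsumoto's theorem) by induction on $p = l(w)$ and then to deduce part (1) from it. Both halves rest on the \emph{Exchange Condition} for Coxeter systems, a direct consequence of the defining relations: if $\ms_{i_1} \cdots \ms_{i_{k-1}}$ is reduced and $l(v s_{i_k}) < l(v)$ for the product $v$, then there exists some $j < k$ with $v s_{i_k} = s_{i_1} \cdots \hat{s}_{i_j} \cdots s_{i_{k-1}}$.

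For part (2), given two reduced expressions $\mw$ and $\mw'$ of $w$ whose first letters correspond to simple reflections $s$ and $s'$ respectively, I would split into cases. If $s = s'$, I strip the common letter and apply the inductive hypothesis to the tails, which are reduced expressions of $sw$ of length $p-1$. If $s \neq s'$, set $m = m_{ss'}$. The central claim is that $w$ admits a reduced expression whose initial block is the alternating word of length $m$ starting with $\ms$, and symmetrically one starting with $\ms'$. These two alternating blocks are related by a single braid move. To establish the claim, I would iterate the Exchange Condition inside the dihedral parabolic subgroup $\langle s, s' \rangle$: the hypotheses $l(sw)<l(w)$ and $l(s'w)<l(w)$ force each subsequent alternating letter to continue decreasing the length, until the full alternating block of length $m$ is reached. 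This produces two reduced expressions $\mw_a, \mw_b$ of $w$, related by one braid move, and applying the first case of the induction to the pairs $(\mw, \mw_a)$ and $(\mw', \mw_b)$ yields the chain $\mw \sim \mw_a \sim \mw_b \sim \mw'$.

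For part (1), I would induct on the number of letters in $\mw$. If $\mw$ is already reduced there is nothing to do, so assume not, and let $k$ be the smallest index for which the prefix $\ms_{i_1} \cdots \ms_{i_k}$ is not reduced. Then $\ms_{i_1} \cdots \ms_{i_{k-1}}$ is a reduced expression, and applying the Exchange Condition to it produces an index $j \leq k-1$ such that, after rearranging, $s_{i_j}(s_{i_{j+1}} \cdots s_{i_{k-1}}) = (s_{i_{j+1}} \cdots s_{i_{k-1}}) s_{i_k}$ in $W$, with both sides being reduced expressions of length $k-j$ of the same element. By part (2), braid moves transform the subword $\ms_{i_j} \ms_{i_{j+1}} \cdots \ms_{i_{k-1}}$ of $\mw$ into $\ms_{i_{j+1}} \cdots \ms_{i_{k-1}} \ms_{i_k}$, placing two consecutive copies of $\ms_{i_k}$ inside $\mw$; a nil-move then removes them, giving an expression for $w$ with strictly fewer letters, and induction completes the argument.

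The main obstacle is the dihedral case of part (2): producing a reduced expression of $w$ with an alternating prefix of full length $m_{ss'}$. This requires an auxiliary induction on the length of the alternating prefix, combining the Exchange Condition with the well-understood Bruhat structure of the dihedral parabolic $\langle s, s' \rangle$ (where the longest element has two reduced expressions, related by precisely one braid move). Once this step is secured, the remaining arguments are straightforward combinatorial bookkeeping on the length of $w$ and on the number of letters in $\mw$.
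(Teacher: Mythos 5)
This statement appears in the paper only as a quoted Fact with a citation to [BB, Theorem~3.3.1]; the paper gives no proof of it, so there is nothing internal to compare against. Your plan is the classical Tits--Matsumoto argument---part (2) by induction on $l(w)$ using the Exchange Condition, with the key lemma that two distinct left descents $s\neq s'$ force a reduced alternating prefix of length $m_{ss'}$ (the longest element of the dihedral parabolic), and part (1) deduced from part (2) by locating the first non-reduced prefix, applying the Exchange Condition to create two adjacent equal letters, and cancelling them by a nil-move---which is essentially the proof in the cited source and is sound. The one step you only sketch, the alternating-prefix claim, is exactly the standard parabolic-descent lemma (every $w$ factors as $w_J\cdot{}^Jw$ with additive lengths, and an element of a finite dihedral group with two descents is its longest element), so filling it in as you indicate presents no difficulty.
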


A {\it Coxeter element} $c$ is a product of
all simple reflections in some order. We choose an arbitrary
reduced expression $\mc$ of $c$ and denote by $\mw(\mc)$ the {\it $\mc$-sorting word} of $w,$ that is the
lexicographically first (as a sequence of positions) reduced subword of ${\mc}^{\infty} = \mc\mc\mc\ldots$ for $w.$ In
particular, ${\mw}_{\mo}(\mc)$ denotes the $\mc$-sorting word of the longest element $w_o \in W.$

\section{Subword complexes}

Let $(W; S)$ be a Coxeter system, let $\MQ := \mq_1\ldots\mq_m$ belong to $S^*$
and let $\pi$ be an element in $W.$ The {\it subword
complex} $\Delta(\MQ; \pi)$ is the pure simplicial complex of subwords of $\MQ,$ whose complements
contain a reduced expression of $\pi.$ The vertices of this simplicial complex are labeled
by (positions of) the letters in the word $\MQ.$ Note that two positions are different
even if the letters of $\MQ$ at these positions coincide. 
The maximal simplices of the subword complex $\Delta(\MQ; \pi)$ are the complements of reduced expressions of $\pi$ in the word $\MQ.$

In \cite{KM1}, it was shown that the subword complex $\Delta(\MQ; \rho)$ is either a triangulated sphere (or simply {\it spherical}), or a triangulated ball. It is spherical if and only if the {\it Demazure product} of $\MQ$ is equal to $\pi,$ see the definition of this product and the proof in \cite[Section~3]{KM1}. In this case, $H(\Delta(\MQ; \rho))$ is symmetric and $\gamma(\Delta(\MQ; \rho))$ is defined. In some generality, these spherical subword complexes are polar dual to some simple polytopes. The general description of these polynomials and their geometric realizations is not yet known; under certain conditions, such realizations are constructed and called the {\it brick polytopes} in \cite{PS}. We are interested in combinatorial types of these polytopes. The facets of such a polytope are labeled by (positions of) the letters in the word $\MQ,$ whose complements contain a reduced expression of $\pi.$ Let us give some examples. Denote for the Coxeter group $A_n$ the simple reflections by $s_1,\ldots,s_n$ in a natural order. 

\begin{example} \label{cluster} (Cluster complexes and generalized associahedra).
For any $\mbox{c},$ the subword complex $\Delta(\mc{\mw}_o(\mw{c}); w_o)$ coincides with the cluster complex of type $W.$ Its dual polytope is a generalized associahedron of type $W.$ For example, for 
$\mc = \ms_1\ms_2\ldots\ms_n,$ we have
$$\mc{\mw}_o(\mc) = \ms_1\ms_2\ldots\ms_n\ms_1\ms_2\ldots\ms_n\ms_1\ms_2\ldots{\ms}_{n-1}\ms_1\ms_2\ldots\ms_{n-2}\ldots\ms_1\ms_2\ms_1.$$
Every position yields a vertex of the complex and the facet of the polytope. 
\end{example}

\begin{example} (Duplicated word).
Let $P$ be a set of $n$ positions in some reduced expression $\mw_o$ of the longest element. Define a new word $\MQ^{dup} \in S^*$ obtained by duplicating the letters of $\mw_o$ at positions in $P.$ Under certain conditions (see \cite[Example~3.8]{PS}), the complex $\Delta(\MQ^{dup}; w_o)$ is polar dual to the $n-$dimensional cube; in other words, it is the boundary complex of the $n-$dimensional cross-polytope. For example, the word 
$$\ms_1\ms_1\ms_2\ms_2\ldots\ms_n\ms_n\ms_1\ms_2\ldots\ms_{n-1}\ms_1\ms_2\ldots\ms_{n-2}\ldots\ms_1\ms_2\ms_1$$
is duplicated for $P = \left\{1,\ldots,n\right\}.$ First $2n$ positions yield vertices of $\Delta(\MQ^{dup}; w_o),$ so they correspond to the facets of the cube; the facets associated with the positions $2k - 1$ and $2k$ of letter $\ms_k$ are opposite to each other.
\end{example}

\begin{example} (Multi-cluster complexes).
For any $\mc,$ the subword complex $\mathcal{S}({\mc}^k\mw_o(\mc))$ is called a {\it $k$-cluster complex} of type $W.$ It is not known in general, whether thus defined complexes are polar dual to some polytopes. When this holds, the polytope polar dual to $\mathcal{S}(\mbox{c}^k\mbox{w}_o(c))$ is called the {\it $k-$associahedron} of type $W.$ See {\cite{CLS}\cite{PS}} for more details.
\end{example}
We will use the following simple observation.

\begin{lemma} \label{linksubword}
The link of a simplex corresponding to a word $\ma_1 \ma_2\ldots\ma_x$ in a subword complex $\Delta(\MUU; \rho),$ where $\MUU \in S^*, \rho \in W$ is isomorphic to the complex $\Delta(\MUU'; \rho),$ where $\MUU'$ is obtained from $\MUU$ by removing all the letters $\ma_i, i = 1,2,\ldots,x.$
\end{lemma}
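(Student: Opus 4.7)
The plan is to unwind the definition of the subword complex and show that the stated map is just the obvious identification of positions. Identify the vertex set of $\Delta(\MUU;\rho)$ with the set of positions of $\MUU$, and identify the vertex set of $\Delta(\MUU';\rho)$ with the set of positions of $\MUU$ which are not among $\{a_1,\dots,a_x\}$ (this is literally how $\MUU'$ was constructed). Under this identification, a subword of $\MUU$ is the same data as a subset of positions, and for a subset $\tau$ of positions of $\MUU'$, the ``complement in $\MUU'$'' of $\tau$ as a word is obtained from the ``complement in $\MUU$'' of $\tau\cup\{a_1,\dots,a_x\}$ by doing nothing: both are the sub-sequence of $\MUU$ indexed by positions outside $\tau\cup\{a_1,\dots,a_x\}$.

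Now set $A=\{a_1,\dots,a_x\}$. By definition, $\tau\in \Lk_{\Delta(\MUU;\rho)}(A)$ iff $\tau\cap A=\emptyset$ and $\tau\cup A$ is a face of $\Delta(\MUU;\rho)$, i.e.\ the subword of $\MUU$ complementary to $\tau\cup A$ contains a reduced expression of $\rho$. By the previous paragraph, this is exactly the condition that $\tau$ is a subset of the positions of $\MUU'$ whose complement in $\MUU'$ contains a reduced expression of $\rho$, i.e.\ $\tau$ is a face of $\Delta(\MUU';\rho)$. Thus the identification of vertex sets is an isomorphism of simplicial complexes.

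The only small point to check, to make the above argument run, is that $A$ itself is indeed a face of $\Delta(\MUU;\rho)$; but this is part of what it means to speak of $\Lk_{\Delta(\MUU;\rho)}(A)$ in the first place, so we may just assume it (the statement is vacuous otherwise, and both sides are empty). No obstacle of any substance arises; the lemma is purely a bookkeeping statement about positions versus letters, and the entire ``proof'' consists in being careful that removing the letters $\ma_i$ from $\MUU$ to form $\MUU'$ corresponds to deleting the vertices $A$ from the vertex set, and nothing more.
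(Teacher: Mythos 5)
Your proof is correct and is essentially the paper's own argument: both reduce to the observation that, after identifying simplices with sets of positions, the complement of $\tau\cup\{\ma_1,\ldots,\ma_x\}$ in $\MUU$ is literally the same subword as the complement of $\tau$ in $\MUU'$, so the face conditions coincide. Your side remark about the degenerate case (when $\{\ma_1,\ldots,\ma_x\}$ is not a face, both the link and $\Delta(\MUU';\rho)$ are empty) is also consistent with the definitions used in the paper.
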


\begin{proof}
By definition of the link, the claim says that if $\ma_1 \ma_2\ldots\ma_x$ is a subword of $\MT$ and $\MT$ is a subword of $\MUU,$ than the complement $\MT^c$ of $\MT$ in $\MUU$ contains a reduced expression of $\rho$ if and only if the complement ${\MT^c}'$ to $\MT'$ in $\MUU'$ contains a reduced expression of $\rho,$ where $\MT'$ is obtained from $\MT$ by removing all the letters $\ma_i, i = 1,2,\ldots,x.$ But ${\MT^c}'$ coincides with $\MT^c,$ and the statement follows.
\end{proof}

\section{Braid moves and edge subdivisions}

For any $s_i, s_j \in S$ we define a family of words in $S^*:$ 
$$\mw_{i,j}^{k}: = \ms_i\ms_j\ms_i\ms_j\ms_i \ldots,$$
containing $(m_{ij} - k) = (m_{ji} - k)$ letters, for $k \in \left\{0, 1,\ldots, m_{ij}\right\}.$ We have
$$\mw_{i,j}^k = \ms_i\ms_j\mw_{i,j}^{k+1}.$$
If $k$ is even, we also have $\mw_{i,j}^k = \ms_i\mw_{i,j}^{k+1}\ms_j,$ otherwise $\mw_{i,j}^k = \ms_i\mw_{i,j}^{k+1}\ms_i.$
The braid relation for $s_i$ and $s_j$ is equivalent to the identity 
$$w_{i,j}^{0} = w_{j,i}^{0},$$
where $w_{i,j}^k$ and $w_{j,i}^k$ are the elements of $W$ expressed by the words $\mw_{i,j}^k$ and $\mw_{j,i}^k,$ respectively.
Fix any two words $\MQ, \MQ' \in S^*.$ The words $\MQ \mw_{i,j}^{0} \MQ'$ and $\MQ' \mw_{i,j}^{0} \MQ$ express the same element in the group $W$ and are linked to each other by only one braid move. In this section we shall discuss the relation between the corresponding subword complexes. Denote
$$\MQ_1^k := \MQ \mw_{i,j}^{k} \MQ', \quad \MQ_2^k := \MQ \mw_{j,i}^{k} \MQ'.$$
Let $\mf_l$ and $\mg_l$ be the $l-$th letters in $\mw_{i,j}^0$ and $\mw_{j,i}^{0},$ respectively, for $l = 1,2,\ldots,m_{ij}.$ Here we consider $w_{i,j}^0$ as a subword of $\MQ_1^0$ and  $\mw_{j,i}^0$ as a subword of $\MQ_2^0.$ We call the letters $\mf_l$ and $\mg_l$ {\it internal} if $l \in \left\{2,3,\ldots,(m_{ij} - 1)\right\}.$
Consider $1-$simplices $F = \left\{\mf_1, \mf_{m_{ij}}\right\}$ and $G = \left\{\mg_1, \mg_{m_{ij}}\right\}.$
We fix an arbitrary group element element $\pi \in W$ and consider the subword complexes
$$\Delta_1 := \Delta(\MQ_1^0; \pi), \quad \Delta_2 := \Delta(\MQ_2^0; \pi).$$
Consider a family of conditions indexed by $k \in \left\{0, 1,\ldots, m_{ij}\right\}:$

\begin{itemize}
\item[($A_k$)] $\MQ_1^k$ does not contain any reduced expression of $\pi;$
\item[($B_k$)] $\MQ_2^k$ does not contain any reduced expression of $\pi.$
\end{itemize}

Note that ($A_k$) implies ($A_l$) and ($B_l$), for any $l > k.$ Similarly, ($B_k$) implies ($A_l$) and ($B_l$), for any $l > k.$ The following lemma is an immediate consequence of the definition of subword complexes.

\begin{lemma} \label{A2B2}
Condition ($A_2$) is satisfied if and only if the simplex $F$ does not belong to $\Delta_1.$ Similarly, condition ($B_2$) is satisfied if and only if $G \notin \Delta_2.$
\end{lemma}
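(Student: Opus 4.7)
The plan is a direct unwinding of the definitions; indeed, as the paper already indicates, the lemma is an immediate consequence of the definition of subword complexes, and the task is really just to identify the word $\MQ_1^2$ (resp.\ $\MQ_2^2$) with a word that arises as the complement of a specific simplex.

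First I would recall what $F \in \Delta_1$ means. By the very definition of the subword complex $\Delta_1 = \Delta(\MQ_1^0; \pi)$, the pair $F = \{\mf_1, \mf_{m_{ij}}\}$ is a simplex of $\Delta_1$ if and only if the word obtained from $\MQ_1^0 = \MQ\,\mw_{i,j}^0\,\MQ'$ by deleting the two letters at the positions $\mf_1$ and $\mf_{m_{ij}}$ contains a reduced expression of $\pi$. The letters $\mf_1$ and $\mf_{m_{ij}}$ are by construction the first and the last letters of the middle block $\mw_{i,j}^0$, so after their removal the remaining word reads $\MQ$ followed by the internal letters $\mf_2\mf_3\ldots\mf_{m_{ij}-1}$ followed by $\MQ'$.

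Next I would identify this internal block with $\mw_{i,j}^2$. This is immediate from the alternating recursive descriptions $\mw_{i,j}^k = \ms_i\,\mw_{i,j}^{k+1}\,\ms_j$ for $k$ even and $\mw_{i,j}^k = \ms_i\,\mw_{i,j}^{k+1}\,\ms_i$ for $k$ odd: peeling off the first and last letters of $\mw_{i,j}^0$ gives exactly $\mw_{i,j}^2$, independently of the parity of $m_{ij}$. Consequently the complement of $F$ in $\MQ_1^0$ is literally the word $\MQ_1^2 = \MQ\,\mw_{i,j}^2\,\MQ'$. Therefore $F \in \Delta_1$ iff $\MQ_1^2$ contains a reduced expression of $\pi$, which is precisely the negation of $(A_2)$; equivalently, $(A_2)$ holds iff $F \notin \Delta_1$.

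The statement for $G$ and $(B_2)$ is proved by the same argument with the roles of $\ms_i$ and $\ms_j$ exchanged throughout: $G = \{\mg_1, \mg_{m_{ij}}\}$ consists of the first and last letters of the block $\mw_{j,i}^0$, whose removal from $\MQ_2^0$ yields $\MQ_2^2$. I do not anticipate any genuine obstacle here; the only point that requires any care is the bookkeeping between positions and letters, and the verification that the two-step shrinking of $\mw_{i,j}^0$ from both ends indeed produces $\mw_{i,j}^2$ (as opposed to, say, a shift of it), which is essentially forced by the definition.
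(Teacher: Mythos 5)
Your strategy is the right one, and it is what the paper itself intends by calling the lemma an immediate consequence of the definitions: $F\in\Delta_1$ iff the word obtained from $\MQ_1^0=\MQ\,\mw_{i,j}^0\,\MQ'$ by deleting the positions $\mf_1$ and $\mf_{m_{ij}}$ contains a reduced expression of $\pi$. But the step you yourself flag as the only delicate point is carried out incorrectly. Since $\mw_{i,j}^0=\ms_i\ms_j\ms_i\ldots$, the internal block $\mf_2\mf_3\ldots\mf_{m_{ij}-1}$ is the alternating word of length $m_{ij}-2$ \emph{beginning with} $\ms_j$, i.e.\ it is $\mw_{j,i}^2$, not $\mw_{i,j}^2$. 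Hence the complement of $F$ in $\MQ_1^0$ is $\MQ_2^2$, not $\MQ_1^2$, and the definition gives $F\notin\Delta_1\Leftrightarrow(B_2)$ and, symmetrically, $G\notin\Delta_2\Leftrightarrow(A_2)$. The recursive identities you invoke cannot be taken at face value: as printed, $\ms_i\mw_{i,j}^{k+1}\ms_j$ has $m_{ij}-k+1$ letters and starts with $\ms_i\ms_i$, so they contain typos and do not support your parity-independent claim that peeling both ends of $\mw_{i,j}^0$ yields $\mw_{i,j}^2$.

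This is not a harmless relabelling, because $(A_2)$ and $(B_2)$ are genuinely inequivalent. In the paper's own $I_2(m)$ example one has $(A_2)$ false and $(B_2)$ true, while a direct check shows $F\notin\Delta_1$ (its complement is $\ms_1\ms_2\ms_2\ms_1\ldots$, which contains no reduced word for $w_o$) and $G\in\Delta_2$; this contradicts your chain ``$F\in\Delta_1$ iff $(A_2)$ fails''. The pairing $F\leftrightarrow(B_2)$, $G\leftrightarrow(A_2)$ is also the one the paper actually uses later: $\psi_F$ identifies $\Lk_{\Delta_1}(F)$ with $\Delta(\MQ_2^2;\pi)$ via Lemma \ref{linksubword}, and the proof of Theorem \ref{braidstellar1} quotes the lemma in the form ``condition $(B_2)$ holds if and only if $\Delta_{1,F}=\emptyset$''. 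So the statement of Lemma \ref{A2B2} as printed appears to have the two conditions interchanged, and your argument reproduces the printed version only through the compensating misidentification above. The repair is simple: identify the internal block of $\mw_{i,j}^0$ (resp.\ $\mw_{j,i}^0$) as $\mw_{j,i}^2$ (resp.\ $\mw_{i,j}^2$), which proves the lemma with $(A_2)$ and $(B_2)$ swapped.
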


Fix any $l = 1,2,\ldots,(m_{ij}-1).$ Let
$$\varphi_l: \Delta(\MQ_1^2; \pi) \overset\sim\to \Lk_{\Delta_1}(\left\{\mf_l, \mf_{l+1}\right\});
\quad \psi_l: \Delta(\MQ_2^{2}; \pi) \overset\sim\to \Lk_{\Delta_2}(\left\{\mg_l, \mg_{l+1}\right\});$$
$$\varphi_G: \Delta(\MQ_1^2; \pi) \overset\sim\to \Lk_{\Delta_2}(G);
\quad \psi_F: \Delta(\MQ_2^2; \pi) \overset\sim\to \Lk_{\Delta_1}(F);$$
be the isomorphisms introduced in Lemma \ref{linksubword}.

\begin{lemma} \label{linkflgl}
For every internal $\mf_l, \mg_l,$ we have
\begin{equation} \label{linkfl}
\Lk_{\Delta_1}(\left\{\mf_l\right\}) = (\Lk_{\Delta_1}(\left\{\mf_{l-1}, \mf_l\right\}) * \left\{\mf_{l-1}\right\}) \cup (\Lk_{\Delta_1}(\left\{\mf_l, \mf_{l+1}\right\}) * \left\{\mf_{l+1}\right\}) = 
\end{equation}
$$(\varphi_{l-1}(\Delta(\MQ_1^2; \pi)) * \left\{\mf_{l-1}\right\}) \cup (\varphi_l(\Delta(\MQ_1^2; \pi)) * \left\{\mf_{l+1}\right\}); $$
\begin{equation} \label{linkgl}
\Lk_{\Delta_2}(\left\{\mg_l\right\}) = (\Lk_{\Delta_2}(\left\{\mg_{l-1}, \mg_l\right\}) * \left\{\mg_{l-1}\right\}) \cup (\Lk_{\Delta_2}(\left\{\mg_l, \mg_{l+1}\right\}) * \left\{\mg_{l+1}\right\}) = 
\end{equation}
$$(\psi_{l-1}(\Delta(\MQ_2^2; \pi)) * \left\{\mg_{l-1}\right\}) \cup (\psi_l(\Delta(\MQ_2^2; \pi)) * \left\{\mg_{l+1}\right\}).$$
\end{lemma}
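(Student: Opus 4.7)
The plan is to establish identity (\ref{linkfl}); identity (\ref{linkgl}) follows by the same argument with $(\MQ_1^0,\mw_{i,j}^0,\mf_k)$ replaced by $(\MQ_2^0,\mw_{j,i}^0,\mg_k)$. The second equality on each line is immediate from the definitions of the isomorphisms $\varphi_l,\psi_l$ supplied by Lemma~\ref{linksubword}, so all the content lies in the first equality, which I treat as a set-theoretic equality of subcomplexes of $\Delta_1$.

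The inclusion $\supseteq$ is routine. For any $\tau\in\Lk_{\Delta_1}(\{\mf_{l-1},\mf_l\})$ we have $\tau\cup\{\mf_{l-1},\mf_l\}\in\Delta_1$; downward closure of the simplicial complex forces both $\tau$ and $\tau\cup\{\mf_{l-1}\}$ to lie in $\Lk_{\Delta_1}(\{\mf_l\})$, since $\mf_l\neq \mf_{l-1}$. The second summand is handled symmetrically.

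For the reverse inclusion I would proceed by case analysis on whether $\mf_{l-1}$ or $\mf_{l+1}$ is a vertex of a given $\sigma\in\Lk_{\Delta_1}(\{\mf_l\})$. If $\mf_{l-1}\in\sigma$, writing $\sigma=\tau\cup\{\mf_{l-1}\}$ places $\tau$ directly into $\Lk_{\Delta_1}(\{\mf_{l-1},\mf_l\})$ and $\sigma$ into the first summand; the case $\mf_{l+1}\in\sigma$ is symmetric. The main obstacle, and the only step with genuine combinatorial content, is the case in which $\sigma$ contains neither $\mf_{l-1}$ nor $\mf_{l+1}$. Here I would exploit the alternating structure of $\mw_{i,j}^0$: since $l$ is internal, $\mf_{l-1}$ and $\mf_{l+1}$ are the same simple reflection $\ms$, and in the complement of $\sigma\cup\{\mf_l\}$ inside $\MQ_1^0$ they become adjacent, because $\mf_l$ was the only letter originally separating them. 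Any reduced expression $e$ of $\pi$ extracted from this complement cannot use both of these positions, for otherwise $e$ would contain $\ms\ms$ as a consecutive substring and a nil-move would strictly shorten it, contradicting reducedness. Dropping whichever of $\mf_{l-1},\mf_{l+1}$ is unused by $e$ witnesses that $\sigma\cup\{\mf_{l-1},\mf_l\}$ or $\sigma\cup\{\mf_l,\mf_{l+1}\}$ itself lies in $\Delta_1$, so $\sigma$ belongs to one of the two link summands on the right-hand side. This is the unique point at which the internality of $l$ is essentially used.
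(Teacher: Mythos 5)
Your proof is correct and follows essentially the same route as the paper: the whole weight rests on the observation that, once $\mf_l$ is excluded, the positions $\mf_{l-1}$ and $\mf_{l+1}$ carry consecutive identical letters, so no reduced expression of $\pi$ can use both, which is exactly the paper's key step. The only (cosmetic) difference is packaging: the paper identifies $\Lk_{\Delta_1}(\{\mf_l\})$ with the subword complex $\Delta(\MRR_l;\pi)$ via Lemma~\ref{linksubword} and argues through its maximal simplices, whereas you run the same nil-move argument directly as a per-simplex case analysis inside $\Delta_1$.
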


Note that these unions are not necessary disjoint.

\begin{proof}
By Lemma \ref{linksubword}, $\Lk_{\Delta_1}(\left\{\mf_l\right\})$ is isomorphic to the subword complex $\Delta(\MRR_l; \pi),$ where $\MRR_l$ is obtained from $\MQ_1^0$ by removing the letter $\mf_l:$
$$\MRR_l = \MQ\mf_1\mf_2\ldots\mf_{l-1}\mf_{l+1}\ldots\mf_{m_{ij}}\MQ'.$$
The letters on the positions $\mf_{l-1}$ and $\mf_{l+1}$ coincide. No reduced expression of $\pi$ can contain two consecutive identical letters; thus, the complement to every reduced expression of $\pi$ in $\MRR_l$ contains at least one of $\mf_{l-1}$ and $\mf_{l+1}.$ Therefore, any maximal simplex in $\Delta(\MRR_l; \pi)$ is contained in the union $(\Lk_{\Delta(\MRR_l;\pi)}(\left\{\mf_{l-1}\right\}) * \left\{\mf_{l-1}\right\}) \cup (\Lk_{\Delta(\MRR_l;\pi)}(\left\{\mf_{l+1}\right\}) * \left\{\mf_{l+1}\right\}).$
Since each simplex in $\Delta(\MRR_l; \pi)$ is contained in a maximal one, we have 
$$\Delta(\MRR_l; \pi) \subset (\Lk_{\Delta(\MRR_l;\pi)}(\left\{\mf_{l-1}\right\}) * \left\{\mf_{l-1}\right\}) \cup (\Lk_{\Delta(\MRR_l;\pi)}(\left\{\mf_{l+1}\right\}) * \left\{\mf_{l+1}\right\}).$$
The inverse inclusion is clear; thus, we have
$$\Delta(\MRR_l; \pi) = (\Lk_{\Delta(\MRR_l;\pi)}(\left\{\mf_{l-1}\right\}) * \left\{\mf_{l-1}\right\}) \cup (\Lk_{\Delta(\MRR_l;\pi)}(\left\{\mf_{l+1}\right\}) * \left\{\mf_{l+1}\right\}),$$
that implies the first identity in (\ref{linkfl}). The second identity follows from the definition of the isomorphisms $\varphi_{l-1}, \varphi_l.$ The proof of identities (\ref{linkgl}) is similar.
\end{proof}

Define the subcomplexes $\Delta_{1,int}, \Delta_{1, F} \in \Delta_1, \Delta_{2,int}, \Delta_{2, F} \in \Delta_2$ as follows:
$$\Delta_{1,int} = \left\{\varphi_l(\sigma) \cup \left\{\mf_l, \mf_{l+1}\right\}, \varphi_l(\sigma) \cup \left\{\mf_l\right\}, \varphi_{l-1}(\sigma) \cup \left\{\mf_l\right\} | \sigma \in \Delta(\MQ_1^2; \pi), l = 2, 3,\ldots,(m_{ij} - 1) \right\};$$
$$\Delta_{1,F} =  \left\{\psi_F(\rho) \cup \left\{\mf_1, \mf_{m_{ij}}\right\} | \rho \in \Delta(\MQ_2^2; \pi) \right\};$$
$$\Delta_{2,int} = \left\{\psi_l(\rho) \cup \left\{\mg_l, \mg_{l+1}\right\}, \psi_l(\rho) \cup \left\{\mg_l\right\}, \psi_{l-1}(\rho) \cup \left\{\mg_l\right\} | \rho \in \Delta(\MQ_2^2; \pi), l = 2, 3,\ldots,(m_{ij} - 1) \right\};$$
$$\Delta_{2,G} =  \left\{\varphi_F(\rho) \cup \left\{\mg_1, \mg_{m_{ij}}\right\} | \sigma \in \Delta(\MQ_1^2; \pi) \right\}.$$

Consider the complexes
$$\widetilde{\Delta_1} = \Delta_1 \backslash (\Delta_{1,int} \cup \Delta_{1,F}); \quad \widetilde{\Delta_2} = \Delta_2 \backslash (\Delta_{2,int} \cup \Delta_{2,G}).$$

\begin{lemma} \label{A3B3edges}
Assume that conditions ($A_3$) and ($B_3$) hold and $m_{ij} > 3.$ Then $\Delta_1$ contains no $1-$simplices of the form $\left\{\mf_k, \mf_l\right\},$ for $\mf_k$ internal, $|k - l| \neq 1.$
\end{lemma}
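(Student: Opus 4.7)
The plan is to argue by contradiction, exploiting the dihedral structure of the parabolic subgroup $\langle s_i,s_j\rangle$ together with the uniqueness of short reduced expressions in it. Suppose for contradiction that $\{\mf_k,\mf_l\}\in\Delta_1$ with $\mf_k$ internal and $|k-l|\ge 2$. Then the complement of $\{\mf_k,\mf_l\}$ in $\MQ_1^0$ must contain some reduced expression $\MT$ of $\pi$. Decompose $\MT=\MT_1\MT_2\MT_3$, where $\MT_1$ and $\MT_3$ are the letters selected inside $\MQ$ and $\MQ'$, and $\MT_2$ is the portion selected inside the middle block $\mw_{i,j}^0\setminus\{\mf_k,\mf_l\}$; denote by $u\in\langle s_i,s_j\rangle$ the group element expressed by $\MT_2$.

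The first key step is to prove the bound $l(u)\le m_{ij}-3$. Since $|\MT_2|\le m_{ij}-2$, the only case to exclude is $l(u)=m_{ij}-2$, which would force $|\MT_2|=m_{ij}-2$, so $\MT_2$ would use every surviving position of the block and would coincide as a word with $\mw_{i,j}^0\setminus\{\mf_k,\mf_l\}$; moreover $\MT_2$ itself would have to be reduced. But since $\mf_k$ is internal and $|k-l|\ge 2$, both $\mf_{k-1}$ and $\mf_{k+1}$ remain in the block, become adjacent after removing $\mf_k$, and carry the same letter by the alternation of $\mw_{i,j}^0$. This creates two consecutive identical letters in $\mw_{i,j}^0\setminus\{\mf_k,\mf_l\}$, incompatible with being reduced, a contradiction.

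With $l(u)\le m_{ij}-3$ in hand, the second step uses that in the dihedral group of order $2m_{ij}$ every element of length strictly less than $m_{ij}$ admits a unique reduced expression. Let $\mw^*$ be this unique reduced expression of $u$. Then $\mw^*$ is either empty, a prefix $\ms_i\ms_j\ms_i\ldots$ of $\mw_{i,j}^3$, or a prefix $\ms_j\ms_i\ms_j\ldots$ of $\mw_{j,i}^3$. Setting $\MT^*:=\MT_1\mw^*\MT_3$, the word $\MT^*$ still expresses $\pi$, and comparing lengths with the reduced $\MT$ (using $l(\pi)\le|\MT^*|\le|\MT|=l(\pi)$) shows that $\MT^*$ is itself reduced. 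By construction $\MT^*$ embeds as a subword of $\MQ\mw_{i,j}^3\MQ'=\MQ_1^3$ in the empty/first subcase and of $\MQ\mw_{j,i}^3\MQ'=\MQ_2^3$ in the second, contradicting $(A_3)$ or $(B_3)$ respectively.

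The hard part is precisely the borderline case $l(u)=m_{ij}-2$ handled in the first step: only there is the reduced expression of $u$ potentially too long to sit inside $\mw_{i,j}^3$ or $\mw_{j,i}^3$. The two hypotheses ``$\mf_k$ internal'' and ``$|k-l|\ge 2$'' combine exactly to force a pair of consecutive equal letters in $\mw_{i,j}^0\setminus\{\mf_k,\mf_l\}$, ruling this case out; the assumption $m_{ij}>3$ guarantees that internal positions exist at all and that both words $\mw_{i,j}^3$ and $\mw_{j,i}^3$ are nonempty, so that the subword inclusions used in the second step make sense.
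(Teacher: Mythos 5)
Your proof is correct, but it is organized differently from the paper's. You pick a reduced expression $\MT$ of $\pi$ avoiding $\{\mf_k,\mf_l\}$, isolate its dihedral segment $\MT_2$, rule out $l(u)=m_{ij}-2$ via the two adjacent equal letters created by deleting the internal letter $\mf_k$ (this is the one ingredient you share with the paper), and then straighten $\MT_2$ to the unique alternating reduced word of $u$, re-embedding the result into $\MQ_1^3$ or $\MQ_2^3$ to contradict ($A_3$) or ($B_3$). The length comparison $l(\pi)\le|\MT^*|\le|\MT|$ correctly guarantees the new expression is reduced, and the implicit use of standard parabolic facts (that $u\in\langle s_i,s_j\rangle$ has $W$-length equal to its dihedral length and reduced words only in $\ms_i,\ms_j$) is harmless, though worth a word. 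The paper instead never passes to the group element: it iterates the consecutive-identical-letters observation at the level of simplices, showing that the edge $\{\mf_k,\mf_l\}$ lies in $\Delta_1$ if and only if a larger simplex does whose block-complement is literally $\mw_{i,j}^4$, $\mw_{i,j}^3$ or $\mw_{j,i}^3$, and thereby obtains an exact equivalence: such edges exist precisely when ($A_4$), ($A_3$) or ($B_3$) fails (then ($A_3$) $\Rightarrow$ ($A_4$) finishes). So the paper's route buys a sharper ``if and only if'' characterization of these edges, while yours is a self-contained contradiction argument that only delivers the non-existence statement --- which is all the lemma asserts --- and arguably makes the role of the dihedral parabolic and of the exponent $3$ in ($A_3$), ($B_3$) more transparent.
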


\begin{proof}
Using the same argument about consecutive identical letters as in the proof of Lemma \ref{linkflgl}, one can show the following:
\begin{itemize}
\item[(i)] for $1 < k < l < m_{ij}, \quad \left\{\mf_k, \mf_l\right\}$ belongs to $\Delta_1$ if and only if so does $\left\{\mf_{k-1}, \mf_k, \mf_l, \mf_{l+1}\right\};$
\item[(ii)] for $1 < k < m_{ij}, \quad \left\{\mf_k, \mf_{m_{ij}}\right\}$ belongs to $\Delta_1$ if and only if so does $\left\{\mf_{k-1}, \mf_k, \mf_{m_{ij}}\right\};$
\item[(iii)] for $1 < k < m_{ij}, \quad \left\{\mf_1, \mf_k\right\}$ belongs to $\Delta_1$ if and only if so does $\left\{\mf_1, \mf_{k}, \mf_{k+1}\right\}.$
\end{itemize}
By arguments from the proof of the Lemma \ref{linksubword}, this implies that
\begin{itemize}
\item[(i)] for $1 < k < l < m_{ij}, \quad \left\{\mf_k, \mf_l\right\}$ belongs to $\Delta_1$ if and only if ($A_4$) does not hold;
\item[(ii)] for $1 < k < m_{ij}, \quad \left\{\mf_k, \mf_{m_{ij}}\right\}$ belongs to $\Delta_1$ if and only if ($A_3$) does not hold;
\item[(iii)] for $1 < k < m_{ij}, \quad \left\{\mf_1, \mf_k\right\}$ belongs to $\Delta_1$ if and only if ($B_3$) does not hold.
\end{itemize}
It remains to recall that ($A_3$) implies ($A_4$).
\end{proof}

\begin{corollary} \label{A3B3disjoint}
Assume that conditions ($A_3$) and ($B_3$) hold and $m_{ij} > 3.$ Then the unions in identities (\ref{linkfl})-(\ref{linkgl}) are disjoint.
\end{corollary}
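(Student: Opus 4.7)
The plan is to argue by contradiction: were a simplex to lie in the intersection of the two pieces on the right-hand side of (\ref{linkfl}), it would have to contain both $\mf_{l-1}$ and $\mf_{l+1}$, and so the edge $\{\mf_{l-1}, \mf_{l+1}\}$ would be a face of $\Delta_1$. Lemma~\ref{A3B3edges} will then supply the contradiction.

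Concretely, I would first unwind the definition of the join: a simplex of $\Lk_{\Delta_1}(\{\mf_{l-1}, \mf_l\}) * \{\mf_{l-1}\}$ has the form $\sigma \cup \{\mf_{l-1}\}$ with $\mf_l, \mf_{l-1} \notin \sigma$, while a simplex of $\Lk_{\Delta_1}(\{\mf_l, \mf_{l+1}\}) * \{\mf_{l+1}\}$ has the form $\sigma' \cup \{\mf_{l+1}\}$ with $\mf_l, \mf_{l+1} \notin \sigma'$. Because $\mf_{l-1}$ and $\mf_{l+1}$ occupy distinct positions of $\MQ_1^0$ and therefore label distinct vertices of $\Delta_1$, any simplex lying in both joins must contain both of them, forcing $\{\mf_{l-1}, \mf_{l+1}\}$ to be a face of $\Delta_1$.

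Next I would verify that this edge cannot be present. Since $\mf_l$ is internal, $2 \leq l \leq m_{ij}-1$, and the hypothesis $m_{ij} > 3$ guarantees that at least one of $\mf_{l-1}$, $\mf_{l+1}$ is internal as well: for $l = 2$, $\mf_3$ is internal because $3 \leq m_{ij}-1$; for $l = m_{ij}-1$, $\mf_{m_{ij}-2}$ is internal because $m_{ij}-2 \geq 2$; for the remaining values of $l$, both are internal. Since the index gap $|(l-1)-(l+1)| = 2 \neq 1$, Lemma~\ref{A3B3edges} applies and rules out $\{\mf_{l-1}, \mf_{l+1}\}$ as a face of $\Delta_1$, giving the desired contradiction. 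The argument for identity (\ref{linkgl}) is literally the same after exchanging $\Delta_1 \leftrightarrow \Delta_2$, $\mf \leftrightarrow \mg$, and $(A_\ast) \leftrightarrow (B_\ast)$; Lemma~\ref{A3B3edges} has an obvious mirror version obtained by these substitutions. I do not foresee any substantial obstacle; the only subtlety is the handling of the boundary values $l \in \{2, m_{ij}-1\}$, and this is precisely where the hypothesis $m_{ij} > 3$ is used.
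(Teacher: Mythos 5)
Your argument is correct and is essentially the paper's own proof: both reduce the disjointness of the unions in (\ref{linkfl})--(\ref{linkgl}) to the absence of the edge $\left\{\mf_{l-1},\mf_{l+1}\right\}$ from $\Delta_1$ (resp.\ $\left\{\mg_{l-1},\mg_{l+1}\right\}$ from $\Delta_2$), which Lemma~\ref{A3B3edges} rules out; the paper simply states this as ``a reformulation of the claim.'' You additionally verify that the lemma applies (at least one of $\mf_{l-1},\mf_{l+1}$ is internal when $m_{ij}>3$, and the index gap is $2\neq 1$), a detail the paper leaves implicit.
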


\begin{proof}
By Lemma \ref{A3B3edges}, under our assumptions, the $1-$simplex $\left\{\mf_{l-1}, \mf_{l+1}\right\}$ does not belong to $\Delta_1,$ and $\left\{\mg_{l-1}, \mg_{l+1}\right\}$ does not belong to $\Delta_2.$ This is a reformulation of the claim.
\end{proof}

\begin{lemma} \label{widetildesense}
The complex $\widetilde{\Delta_1}$ (resp. $\widetilde{\Delta_2}$)  is the maximal subcomplex of $\Delta_1$ (resp. $\Delta_2$), containing neither $F$ (resp. $G$), nor any vertex given by an internal letter $\mf_l$ (resp $\left\{\mg_l\right\}$).
\end{lemma}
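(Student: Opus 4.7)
My strategy is to reduce the lemma to two set-theoretic assertions: that $\Delta_{1,F}$ equals the set of simplices of $\Delta_1$ containing $F$ as a face, and that $\Delta_{1,int}$ equals the set of simplices of $\Delta_1$ containing at least one internal letter $\mf_l$. Once these descriptions are in place, the complement $\widetilde{\Delta_1}=\Delta_1\setminus(\Delta_{1,int}\cup\Delta_{1,F})$ becomes precisely the family of simplices of $\Delta_1$ that avoid both $F$ and every internal $\mf_l$. The two conditions ``$F\not\subseteq\tau$'' and ``$\mf_l\notin\tau$'' are hereditary under passing to faces, so this family is automatically a subcomplex, and it is manifestly the largest subcomplex of $\Delta_1$ with these properties. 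The analogous statement for $\widetilde{\Delta_2}$ follows from the evident $F\leftrightarrow G$, $\psi_F\leftrightarrow\varphi_G$, $\varphi_l\leftrightarrow\psi_l$ symmetry.

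For $\Delta_{1,F}$ I plan to argue as follows: any $\tau\in\Delta_1$ with $F\subseteq\tau$ admits the decomposition $\tau=(\tau\setminus F)\cup F$ with $\tau\setminus F$ lying in $\Lk_{\Delta_1}(F)$, and via the isomorphism $\psi_F$ one obtains $\tau\setminus F=\psi_F(\rho)$ for some $\rho\in\Delta(\MQ_2^2;\pi)$, so $\tau\in\Delta_{1,F}$. The converse inclusion is immediate from the definition.

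For $\Delta_{1,int}$ I would take $\tau\in\Delta_1$ containing an internal $\mf_l$ and apply Lemma~\ref{linkflgl} to $\tau\setminus\{\mf_l\}\in\Lk_{\Delta_1}(\{\mf_l\})$; the decomposition of this link as a union of two joins produces four candidate shapes for $\tau$, namely $\varphi_l(\sigma)\cup\{\mf_l\}$, $\varphi_l(\sigma)\cup\{\mf_l,\mf_{l+1}\}$, $\varphi_{l-1}(\sigma)\cup\{\mf_l\}$, and $\varphi_{l-1}(\sigma)\cup\{\mf_{l-1},\mf_l\}$. The first three are verbatim the three families defining $\Delta_{1,int}$, and the fourth matches form~(a) reindexed by $l'=l-1$ whenever $l-1\geq 2$ is itself internal.

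The main obstacle will be the boundary indices $l=2$ and, symmetrically, $l=m_{ij}-1$, where one of $\mf_{l-1},\mf_{l+1}$ fails to be internal, so that the fourth shape produces a simplex such as $\varphi_1(\sigma)\cup\{\mf_1,\mf_2\}$ that does not immediately fit a listed form. The plan here is to absorb such a simplex into form~(b) at $l=2$ by rewriting $\varphi_1(\sigma)\cup\{\mf_1\}$ as $\varphi_2(\sigma')$ for an appropriate $\sigma'$, which reduces to checking that $\varphi_1(\sigma)\cup\{\mf_1,\mf_2,\mf_3\}\in\Delta_1$; this is exactly the kind of condition encoded in the hypotheses governing the surrounding material (for instance those imposed in Corollary~\ref{A3B3disjoint}). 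Once these boundary cases are disposed of, the identification of $\Delta_{1,int}$ is complete and the lemma follows immediately.
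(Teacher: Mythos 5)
Your strategy is the same as the paper's: its proof consists precisely of your first three steps, asserted in two sentences (by the definition of $\psi_F$, $\Delta_{1,F}$ is the union of all simplices containing $F$; by Lemma~\ref{linkflgl}, $\Delta_{1,int}$ is the union over internal $\mf_l$ of all simplices containing $\left\{\mf_l\right\}$), and you deserve credit for isolating the one genuinely delicate point, the shape $\varphi_{l-1}(\sigma)\cup\left\{\mf_{l-1},\mf_l\right\}$ at $l=2$, which the paper glosses over. But your resolution of that boundary case is where the proposal breaks: the membership $\varphi_1(\sigma)\cup\left\{\mf_1,\mf_2,\mf_3\right\}\in\Delta_1$ is not ``encoded in the hypotheses governing the surrounding material'' --- the lemma carries no hypotheses at all (it is used in the unconditional Theorem~\ref{widetildeiso}), and even under ($A_3$), ($B_3$) and $m_{ij}>3$ it can fail. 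Concretely, take $W=I_2(4)$, $\MQ=\MQ'=\me$, $\pi=s_is_j$, so $\MQ_1^0=\ms_i\ms_j\ms_i\ms_j$. Conditions ($A_3$) and ($B_3$) hold (the words $\ms_i$ and $\ms_j$ contain no reduced expression of $s_is_j$), the simplex $\left\{\mf_1,\mf_2\right\}$ lies in $\Delta_1$ (its complement $\mf_3\mf_4$ spells $\ms_i\ms_j$) and contains the internal vertex $\mf_2$, yet $\left\{\mf_1,\mf_2,\mf_3\right\}\notin\Delta_1$ (complement $\ms_j$), and $\left\{\mf_1,\mf_2\right\}$ belongs to none of the three listed families of $\Delta_{1,int}$: form (a) would force $\mf_3$ or $\mf_4$ into the simplex, form (b) at $l=2$ requires exactly the failed membership, and form (c) is impossible because the image of $\varphi_1$ contains no simplex through $\mf_1$. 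So your absorption step cannot be carried out, and no appeal to Corollary~\ref{A3B3disjoint} rescues it.

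What the boundary case actually reveals is that the displayed definition of $\Delta_{1,int}$ omits the family $\varphi_{l-1}(\sigma)\cup\left\{\mf_{l-1},\mf_l\right\}$ at $l=2$ (equivalently, the first family $\varphi_l(\sigma)\cup\left\{\mf_l,\mf_{l+1}\right\}$ should be taken for $l=1,\ldots,m_{ij}-1$); for $l\geq 3$ the fourth shape is form (a) reindexed, as you note, and nothing goes wrong at $l=m_{ij}-1$ since form (a) already includes that index, so $l=2$ is the only asymmetric case. With the literal definition the lemma itself is false: in the example above $\widetilde{\Delta_1}$ would contain $\left\{\mf_1,\mf_2\right\}$ but not its face $\left\{\mf_2\right\}$, so it is not even a subcomplex, and Theorem~\ref{widetildeiso} fails on the same example. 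With the corrected reading of $\Delta_{1,int}$, your first three steps --- the decomposition $\tau=F\cup\psi_F(\rho)$ for $F\subseteq\tau$, the four shapes supplied by Lemma~\ref{linkflgl} for $\tau\ni\mf_l$ internal, and the heredity of the two avoidance conditions --- already constitute a complete proof, which is exactly what the paper's terse argument intends. In short: right approach, correct detection of the gap, but the proposed patch is a step that would fail; the fix is to amend the definition of $\Delta_{1,int}$ (and its mirror $\Delta_{2,int}$), not to prove the missing membership.
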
 

\begin{proof}
By definition of $\psi_F, \quad \Delta_{1,F}$ is the union of all simplices in $\Delta_1,$ containing $F.$ By Lemma \ref{linkflgl}, $\Delta_{1,int}$ is the union by all internal $\mf_l$ of all simplices in $\Delta_1,$ containing $\left\{\mf_l\right\}.$ For $\widetilde{\Delta_1}$ the proof is similar.
\end{proof}

\begin{theorem} \label{widetildeiso}
We have an isomorphism:
$$\phi: \widetilde{\Delta_1} \overset\sim\to \widetilde{\Delta_2},$$
which sends letters in $\MQ$ and in $\MQ'$ to themselves, $\mf_1$ to $\mg_{m_{ij}}$ and $\mf_{m_{ij}}$ to $\mg_1.$
\end{theorem}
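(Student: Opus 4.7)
The plan is to take $\phi$ as the bijection of ambient vertex sets given in the statement---fixing the positions in $\MQ$ and $\MQ'$ and swapping $\mf_1 \leftrightarrow \mg_{m_{ij}}$, $\mf_{m_{ij}} \leftrightarrow \mg_1$---and to verify that it restricts to a simplicial bijection $\widetilde{\Delta_1} \overset{\sim}{\to} \widetilde{\Delta_2}$.

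By Lemma \ref{widetildesense}, every simplex of $\widetilde{\Delta_1}$ is supported on the positions of $\MQ \cup \MQ' \cup \{\mf_1, \mf_{m_{ij}}\}$ and contains at most one of $\mf_1, \mf_{m_{ij}}$; the analogous statement holds for $\widetilde{\Delta_2}$ with $\{\mg_1, \mg_{m_{ij}}\}$. Since $\phi$ sends $\{\mf_1, \mf_{m_{ij}}\}$ bijectively onto $\{\mg_1, \mg_{m_{ij}}\}$, the structural conditions for $\phi(\sigma)$ to lie in $\widetilde{\Delta_2}$ beyond being a simplex of $\Delta_2$ are automatic. Hence the theorem reduces to showing $\sigma \in \Delta_1 \iff \phi(\sigma) \in \Delta_2$ for all such $\sigma$, which I split according to the three possibilities for $\sigma \cap \{\mf_1, \mf_{m_{ij}}\}$.

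In the ``one-sided'' cases, say $\sigma \ni \mf_1$ and $\mf_{m_{ij}} \notin \sigma$, the absence of internal $\mf_l$ from $\sigma$ implies that the braid block of $\MQ_1^0 \setminus \sigma$ is $\mf_2 \cdots \mf_{m_{ij}}$, equal as a word to $\ms_j \ms_i \cdots$ of length $m_{ij} - 1$. Correspondingly $\phi(\sigma) \ni \mg_{m_{ij}}$ contains no internal $\mg_l$, so the braid block of $\MQ_2^0 \setminus \phi(\sigma)$ is $\mg_1 \cdots \mg_{m_{ij}-1}$, which is the same word $\ms_j \ms_i \cdots$. Thus $\MQ_1^0 \setminus \sigma$ and $\MQ_2^0 \setminus \phi(\sigma)$ are literally equal, and the equivalence is immediate. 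The case $\sigma \ni \mf_{m_{ij}}$ is symmetric.

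The substantive case is $\sigma \cap \{\mf_1, \mf_{m_{ij}}\} = \emptyset$, in which the complements are $\MA \mw_{i,j}^0 \MB$ and $\MA \mw_{j,i}^0 \MB$ for common $\MA, \MB$. Given a reduced subword $\tau$ of $\MA \mw_{i,j}^0 \MB$ expressing $\pi$, I decompose $\tau$ according to where each letter lies. Length subadditivity together with the reducedness of $\tau$ forces each piece to be reduced for its own product; in particular, the piece $\mv$ in the braid block is a reduced expression in the dihedral subgroup $\langle s_i, s_j \rangle$ and so alternates in $\ms_i, \ms_j$. Any such alternating word of length $b \leq m_{ij}$ appears as a subword of $\mw_{j,i}^0$ given by a consecutive block of letters (positions $1, \ldots, b$ if $\mv$ begins with $\ms_j$, positions $2, \ldots, b+1$ if with $\ms_i$, all of $\mw_{j,i}^0$ in the degenerate case $b = m_{ij}$); substituting this block for $\mv$ produces a reduced subword of $\MA \mw_{j,i}^0 \MB$ for $\pi$, and the reverse direction is symmetric. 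The main technical content of the proof is this transport step, which rests on the fact that both $\mw_{i,j}^0$ and $\mw_{j,i}^0$ are reduced expressions of the longest element of $\langle s_i, s_j \rangle$.
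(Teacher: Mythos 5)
Your proposal is correct and follows essentially the same route as the paper's proof: reduce via Lemma \ref{widetildesense} to simplices with no internal letters and at most one of $\mf_1,\mf_{m_{ij}}$, observe literal equality of the complement words in the one-sided cases, and in the remaining case transport the (necessarily alternating) braid-block piece of a reduced expression between $\mw_{i,j}^0$ and $\mw_{j,i}^0$, invoking the braid relation when the whole block is used. You merely make explicit the alternation/consecutive-block details that the paper leaves as "it is easy to check."
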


\begin{proof}
Consider any subword $\MT$ in $\MQ_1^0.$ If $\MT$ does not contain any letters from $\mw_{i,j}^{0},$ then it is a simplex in $\widetilde{\Delta_1}$ if and only if it gives a simplex in $\widetilde{\Delta_2},$ while being considered as a subword of $\MQ_2^0.$ Indeed, it is a simplex in $\Delta_1$ if its complement $\MT^c$ in $\MQ_1^0$ contains a reduced expression $\mpp$ of $\pi.$ Let $\mq$ be a maximal common subword (in other words, the intersection) of $\mpp$ and $\mw_{i,j}^{0}.$ If $\mq \neq \mw_{i,j}^{0},$ then $\mq$ is a subword of $\mw_{i,j}^{0}$ as well, and $\mpp$ is a subword of the complement of ${{\MT}^c}'$ of $\MT$ in $\MQ_2^0.$ If $\mq = \mw_{i,j}^{0},$ then the word $\mpp',$ obtained from $\mpp$ by the braid move changing $\mw_{i,j}^{0}$ by $\mw_{j,i}^{0},$ is a subword of ${\MT^c}'$ and a reduced expression of $\pi.$ In both cases, $\MT$ is a simplex in $\widetilde{\Delta_2}.$

Assume now that $\MT$ contains $\mf_1,$ but does not contain $\mf_{m_{ij}}.$ The word $\MT'$ obtained from $\MT$ by removing $\mf_1$ is a subword of $\MQ_2^0,$ not containing $\mg_{m_{ij}}.$ Consider a subword $\MT''$ of $\MQ_2^0$ obtained by adding the letter $\mg_{m_{ij}}$ (at its position) to $\MT'.$ It is easy to check that $\MT$ is a simplex in $\widetilde{\Delta_1}$ if and only if $\MT''$ is a simplex in $\widetilde{\Delta_2}.$ Thus, we have
$$\St_{\widetilde{\Delta_1}}(\left\{\mf_1\right\}) \cong \Lk_{\widetilde{\Delta_1}}(\left\{\mf_1\right\}) * \left\{\mf_1\right\}  \cong \Lk_{\widetilde{\Delta_2}}(\left\{\mg_{m_{ij}}\right\}) * \left\{\mg_{m_{ij}}\right\} \cong \St_{\widetilde{\Delta_2}}(\left\{\mg_{m_{ij}}\right\}).$$ 
Similarly, we have $\Lk_{\widetilde{\Delta_1}}(\left\{\mf_{m_{ij}}\right\}) \cong \Lk_{\Delta_2}(\left\{\mg_1\right\})$ and $\St_{\Delta_1}(\left\{\mf_{m_{ij}}\right\}) \cong \St_{\Delta_2}(\left\{\mg_1\right\}).$ 

By Lemma \ref{widetildesense}, any subword in $\MQ_1^0$ providing a simplex in $\widetilde{\Delta_1}$ does not contain internal $\mf_l$'s and contain at most one of $\mf_1$ and $\mf_{m_{ij}}.$ Therefore, the previous arguments show that $\phi(\widetilde{\Delta_1}) \subset \widetilde{\Delta_2}.$ Similarly, $\widetilde{\Delta_2} \subset \phi(\widetilde{\Delta_1}),$ and the statement follows.
\end{proof}

\begin{corollary} \label{delta2iso}
$$\Delta_2 \cong \widetilde{\Delta_1} \sqcup (\Delta_{2,int} \cup \Delta_{2,G}).$$
\end{corollary}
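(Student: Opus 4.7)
The plan is to read off the corollary as an immediate consequence of Theorem \ref{widetildeiso} together with the very definition of $\widetilde{\Delta_2}$. First I would observe that $\widetilde{\Delta_2}$ was defined as $\Delta_2 \setminus (\Delta_{2,int} \cup \Delta_{2,G})$, so at the level of simplices we already have a set-theoretic disjoint union
\[
\Delta_2 \;=\; \widetilde{\Delta_2} \;\sqcup\; (\Delta_{2,int} \cup \Delta_{2,G}).
\]
To justify that this is in fact a disjoint union (and not just a set difference with a priori overlapping pieces), I would appeal to Lemma \ref{widetildesense}: by that lemma $\widetilde{\Delta_2}$ contains no simplex carrying an internal letter $\mg_l$ and does not contain the edge $G$, whereas by construction every simplex of $\Delta_{2,int}$ contains some internal $\mg_l$ and every simplex of $\Delta_{2,G}$ contains $G$. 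Hence the two families share no simplex.

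Next I would invoke Theorem \ref{widetildeiso}, which supplies a simplicial isomorphism $\phi \colon \widetilde{\Delta_1} \overset{\sim}{\to} \widetilde{\Delta_2}$ (sending the letters of $\MQ, \MQ'$ to themselves, $\mf_1$ to $\mg_{m_{ij}}$ and $\mf_{m_{ij}}$ to $\mg_1$). Substituting this isomorphism into the decomposition above yields
\[
\Delta_2 \;\cong\; \widetilde{\Delta_1} \;\sqcup\; (\Delta_{2,int} \cup \Delta_{2,G}),
\]
which is precisely the claim.

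There is essentially no obstacle, since all the combinatorial work has already been done in Theorem \ref{widetildeiso} and Lemma \ref{widetildesense}; the only thing to take care of is being explicit that the symbol $\sqcup$ here refers to the disjoint union of the two sets of simplices inside $\Delta_2$ (the vertex sets of $\widetilde{\Delta_1}$ and $\Delta_{2,int} \cup \Delta_{2,G}$ do overlap through the letters of $\MQ$, $\MQ'$ and through $\mf_1, \mf_{m_{ij}} \leftrightarrow \mg_{m_{ij}}, \mg_1$, but no simplex is double-counted). This is the content to highlight before concluding.
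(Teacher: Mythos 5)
Your argument is correct and is exactly the reasoning the paper intends: the corollary is stated without proof as an immediate consequence of the definition $\widetilde{\Delta_2} = \Delta_2 \backslash (\Delta_{2,int} \cup \Delta_{2,G})$ together with the isomorphism $\widetilde{\Delta_1} \cong \widetilde{\Delta_2}$ of Theorem \ref{widetildeiso}, with Lemma \ref{widetildesense} guaranteeing the disjointness. Your added care about which simplices are (not) double-counted matches the paper's implicit argument, so there is nothing to change.
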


Let $\mathcal{V}$ be the union of the sets of the letters of $\MQ_1^0$ and $\MQ_2^0,$ modulo the identification $\mf_1$ with $\mg_{m_{ij}}$ and $\mf_{m_{ij}}$ with $\mg_1.$

\begin{corollary} \label{subdivident}
We have the identities in complexes considered on the set of vertices $\mathcal{V}:$
$$\Delta_1 \cup (\Delta_{2,int} \cup \Delta_{2,G}) = \Delta_2 \cup (\Delta_{1,int} \cup \Delta_{1,F});$$
\begin{equation} \label{widetildedelta}
(\Delta_1 \backslash \Delta_{1,F}) \cup \Delta_{2,int} = \widetilde{\Delta_1} \cup (\Delta_{1,int} \cup \Delta_{2,int}) = \widetilde{\Delta_2} \cup (\Delta_{1,int} \cup \Delta_{2,int}) = (\Delta_2 \backslash \Delta_{2,G}) \cup \Delta_{1,int}.
\end{equation}
\end{corollary}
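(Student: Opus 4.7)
The plan is to deduce the corollary formally from Theorem \ref{widetildeiso} and Lemma \ref{widetildesense} by elementary set manipulations on simplex collections. I would first replace each $\Delta_i$ appearing in the identities by the decomposition $\Delta_i = \widetilde{\Delta_i} \cup \Delta_{i,int} \cup \Delta_{i,*}$ (with $* = F$ for $i=1$ and $* = G$ for $i=2$) supplied by Lemma \ref{widetildesense}, then invoke the identification $\widetilde{\Delta_1} = \widetilde{\Delta_2}$ on the common vertex set $\mathcal{V}$ provided by Theorem \ref{widetildeiso}.

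More concretely, Lemma \ref{widetildesense} characterizes the three subcollections so that every simplex of $\Delta_1$ falls into (at least) one of the three types: avoids $F$ and all internal $\mf_l$ (lies in $\widetilde{\Delta_1}$), contains some internal $\mf_l$ (lies in $\Delta_{1,int}$), or contains $F$ (lies in $\Delta_{1,F}$), and symmetrically for $\Delta_2$. Theorem \ref{widetildeiso} provides the isomorphism $\phi$ fixing the letters of $\MQ$ and $\MQ'$ and swapping $\mf_1 \leftrightarrow \mg_{m_{ij}}$, $\mf_{m_{ij}} \leftrightarrow \mg_1$, which identifies $\widetilde{\Delta_1}$ with $\widetilde{\Delta_2}$ as subcomplexes on $\mathcal{V}$. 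For the first identity of the corollary I would then substitute both decompositions: after the identification, both sides collapse to the common expression $\widetilde{\Delta_1} \cup \Delta_{1,int} \cup \Delta_{1,F} \cup \Delta_{2,int} \cup \Delta_{2,G}$, which proves the equality.

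For the chain (\ref{widetildedelta}), the middle equality is once again the identification $\widetilde{\Delta_1} = \widetilde{\Delta_2}$. The first equality follows by noting that subtracting $\Delta_{1,F}$ from the decomposition of $\Delta_1$ leaves $\widetilde{\Delta_1} \cup \Delta_{1,int}$; adjoining $\Delta_{2,int}$ then yields the middle expression. The last equality is the symmetric statement with the roles of $1$ and $2$ exchanged. With Theorem \ref{widetildeiso}, Lemma \ref{widetildesense}, and Corollary \ref{delta2iso} all in place, the argument is purely formal; the only point deserving verification is the rewrite $\Delta_1 \backslash \Delta_{1,F} = \widetilde{\Delta_1} \cup \Delta_{1,int}$ used in the last step, which is immediate from the characterization of these two subcomplexes by Lemma \ref{widetildesense}. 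I do not anticipate any serious obstacle.
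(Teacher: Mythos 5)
Your route is the paper's intended one: the corollary is stated without an explicit proof and is meant to follow formally from Theorem \ref{widetildeiso} together with Lemma \ref{widetildesense} (and Corollary \ref{delta2iso}), exactly as you set it up. Your argument for the first identity and for the middle equality in (\ref{widetildedelta}) is correct: writing $\Delta_i$ as the union $\widetilde{\Delta_i}\cup\Delta_{i,int}\cup\Delta_{i,F}$ (resp.\ $\Delta_{i,G}$) and identifying $\widetilde{\Delta_1}=\widetilde{\Delta_2}$ on $\mathcal{V}$ collapses both sides of the first identity to the union of the five pieces.

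The gap is precisely at the step you dismiss as immediate. The rewrite $\Delta_1\setminus\Delta_{1,F}=\widetilde{\Delta_1}\cup\Delta_{1,int}$ is not a formal consequence of Lemma \ref{widetildesense}: $\Delta_{1,F}$ consists of all faces containing $F$, $\Delta_{1,int}$ of all faces containing some internal $\mf_l$, and these families need not be disjoint. A face containing both $F$ and an internal vertex (for instance $\left\{\mf_1,\ldots,\mf_{m_{ij}}\right\}$ itself, which is a face of $\Delta_1$ as soon as $\MQ\MQ'$ contains a reduced expression of $\pi$) belongs to $\widetilde{\Delta_1}\cup\Delta_{1,int}$ but is removed in $\Delta_1\setminus\Delta_{1,F}$, and it cannot be recovered from $\Delta_{2,int}$, because its internal vertex $\mf_l$ is not a vertex of any simplex there (on $\mathcal{V}$ only $\mf_1=\mg_{m_{ij}}$ and $\mf_{m_{ij}}=\mg_1$ are identified). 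So the two outer equalities in (\ref{widetildedelta}) require the extra input that no face of $\Delta_1$ contains $F$ together with an internal letter, i.e.\ $\Delta_{1,F}\cap\Delta_{1,int}=\emptyset$ (and symmetrically $\Delta_{2,G}\cap\Delta_{2,int}=\emptyset$). This is not automatic: it is exactly what conditions ($A_3$) and ($B_3$) provide via Lemma \ref{A3B3edges} when $m_{ij}>3$, and what condition ($\ast$) provides when $m_{ij}=3$; accordingly, the paper only uses (\ref{widetildedelta}) in Theorems \ref{braidstellar1} and \ref{braidstellarmij3} under those hypotheses, treating the remaining $m_{ij}=3$ case by explicit formulas. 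You should either establish this disjointness where it holds or state it as a hypothesis for the chain (\ref{widetildedelta}); as an unconditional identity of sets of simplices the rewrite you rely on fails.
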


\begin{remark}
If $m_{ij} = 2,-$ in other words, if $s_i$ and $s_j$ commute,--  we have $\Delta_{1,int} = \Delta_{2,int} = \emptyset$ and on $\mathcal{V}$ we have $\Delta_{1,F} = \Delta_{2,G}.$ Thus, by Corollary \ref{delta2iso}, we see that $\Delta_1 = \Delta_2$ on $\mathcal{V}.$ This is very natural, and this is the reason why in theory of subword complexes, $\MQ$ is often considered {\it up to commutations}, cf. \cite{PS}.
\end{remark}

\begin{theorem} \label{braidstellar1}
Assume that both conditions ($A_3$) and ($B_3$) are satisfied and $m_{ij} > 3.$
Then either $\Delta_1$ and $\Delta_2$ are isomorphic, or one of them is isomorphic to an edge $(m_{ij} - 2)-$subdivision of the other, or there is a simplicial complex $\widetilde{\Delta}$ which can be obtained from each of $\Delta_i,$ for $i=1,2,$ by an edge $(m_{ij} - 2)-$edge subdivision.

More explicitly,  we have the following cases:
\begin{itemize}
\item[(1)] if both ($A_2$) and ($B_2$) are satisfied, then $\Delta_1$ and $\Delta_2$ are combinatorially isomorphic.
\item[(2)] if ($A_2$) is not satisfied, but ($B_2$) is, then $\Delta_1$ is the $(m_{ij} - 2)-$subdivision of $\Delta_2$ along $G;$ 
\item[(3)] if ($A_2$) is satisfied, but ($B_2$) is not, then $\Delta_2$ is the $(m_{ij} - 2)-$subdivision of $\Delta_1$ along $F;$
\item[(4)] if neither ($A_2$), nor ($B_2$) are satisfied, then there exists a simplicial complex $\widetilde{\Delta}$ which is simultaneously the $(m_{ij} - 2)-$subdivision of $\Delta_1$ along $F$ and the $(m_{ij} - 2)-$subdivision of $\Delta_2$ along $G.$
\end{itemize}
\end{theorem}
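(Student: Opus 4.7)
The plan is to combine Theorem~\ref{widetildeiso}, which identifies the ``bulk'' $\widetilde{\Delta_1}\cong\widetilde{\Delta_2}$, with the disjointness supplied by Corollary~\ref{A3B3disjoint}, and then perform a case analysis on whether $(A_2)$ and $(B_2)$ hold. The preliminary observation I would record is that $\Delta(\MQ_1^2;\pi)$ is empty (contains no face at all, not even the empty one) iff $(A_2)$ holds, and similarly for $\MQ_2^2$ and $(B_2)$. From the definitions this immediately gives $\Delta_{1,int}=\Delta_{2,G}=\emptyset$ iff $(A_2)$, and $\Delta_{1,F}=\Delta_{2,int}=\emptyset$ iff $(B_2)$. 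Combining with Lemma~\ref{linkflgl} and Corollary~\ref{A3B3disjoint}, one deduces that whenever $\Delta(\MQ_1^2;\pi)\neq\emptyset$, the internal $\mf_l$'s form a simple path $\mf_1$--$\mf_2$--$\cdots$--$\mf_{m_{ij}}$ in $\Delta_1$, with each consecutive edge $\{\mf_l,\mf_{l+1}\}$ carrying link $\varphi_l(\Delta(\MQ_1^2;\pi))$; the analogue holds for the $\mg_l$'s in $\Delta_2$.

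Case~(1) is immediate: all four of $\Delta_{1,int},\Delta_{1,F},\Delta_{2,int},\Delta_{2,G}$ vanish, so $\widetilde{\Delta_i}=\Delta_i$ and Theorem~\ref{widetildeiso} yields $\Delta_1\cong\Delta_2$. Cases~(2) and~(3) are symmetric, and I treat~(2). There $\widetilde{\Delta_2}=\Delta_2\setminus\Delta_{2,G}$ and $\widetilde{\Delta_1}=\Delta_1\setminus\Delta_{1,int}$. To exhibit $\Delta_1\cong\Sub^{m_{ij}-2}_G(\Delta_2)$ I would extend $\phi\colon\widetilde{\Delta_1}\xrightarrow{\sim}\widetilde{\Delta_2}$ by identifying the internal vertices $\mf_2,\ldots,\mf_{m_{ij}-1}$ of $\Delta_1$ with the new subdivision vertices $r_1,\ldots,r_{m_{ij}-2}$, via the convention $r_i:=\mf_{m_{ij}-i}$, so that the subdivided path $r_0=s=\mg_1,r_1,\ldots,r_{m_{ij}-2},r_{m_{ij}-1}=t=\mg_{m_{ij}}$ corresponds, after applying $\phi$, to the chain $\mf_{m_{ij}},\mf_{m_{ij}-1},\ldots,\mf_1$ in $\Delta_1$. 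The three types $\varphi_l(\sigma)\cup\{\mf_l,\mf_{l+1}\}$, $\varphi_l(\sigma)\cup\{\mf_l\}$, $\varphi_{l-1}(\sigma)\cup\{\mf_l\}$ defining $\Delta_{1,int}$ are then matched under this identification with the three families $\sigma'\cup\{r_{i-1},r_i\}$, $\sigma'\cup\{r_i\}$ and the boundary simplex $\sigma'\cup\{r_{m_{ij}-2},t\}$ produced by the iterated subdivision of $\Delta_2$ along $G$, where $\sigma'=\varphi_G(\sigma)$; the disjointness from Corollary~\ref{A3B3disjoint} makes this correspondence bijective.

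For Case~(4) I would construct a common subdivision $\widetilde{\Delta}$ directly, on the vertex set $V(\Delta_1)$ augmented by the internal $\mg_l$'s (with $\mg_1$ identified with $\phi(\mf_{m_{ij}})$ and $\mg_{m_{ij}}$ with $\phi(\mf_1)$), taking its faces to be $\widetilde{\Delta_1}\cup\Delta_{1,int}$ together with the faces of $\Delta_{2,int}$ whose ``bulk'' vertices are pulled back via $\phi^{-1}$. The identity $\widetilde{\Delta}=\Sub^{m_{ij}-2}_F(\Delta_1)$ is then verified by repeating the Case~(2) argument with the triple $(\Delta_1,F,\mf_l)$ playing the role of $(\Delta_2,G,\mg_l)$, so that the new path subdividing $F$ passes through the internal $\mg_l$'s and its transverse links come from $\Lk_{\Delta_1}(F)\cong\psi_F(\Delta(\MQ_2^2;\pi))$. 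The symmetric identity $\widetilde{\Delta}=\Sub^{m_{ij}-2}_G(\Delta_2)$ follows in the same way.

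The main obstacle will be the bookkeeping in Cases~(2)--(4): precisely matching the three face-types defining $\Delta_{1,int}$ (and, in Case~(4), also $\Delta_{2,int}$) against the three groups of new simplices of the iterated subdivision, while tracking the asymmetric role of the ``endpoints'' $r_0=s$ and $r_{m_{ij}-1}=t$ against the genuine vertices $\mf_1,\mf_{m_{ij}}$ (respectively $\mg_1,\mg_{m_{ij}}$). The disjoint-union property from Corollary~\ref{A3B3disjoint} is exactly what keeps this bijection well-defined and free of double-counting.
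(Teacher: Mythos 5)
Your proposal is correct and follows essentially the same route as the paper: it rests on the identification $\widetilde{\Delta_1}\cong\widetilde{\Delta_2}$ from Theorem~\ref{widetildeiso}, the observation that $\Delta_{1,int},\Delta_{2,G}$ (resp. $\Delta_{1,F},\Delta_{2,int}$) vanish exactly when ($A_2$) (resp. ($B_2$)) holds, and the no-chord/disjointness facts under ($A_3$), ($B_3$) which force the links of the consecutive $\mf$- (resp. $\mg$-) edges to coincide with $\Lk_{\Delta_2}(G)$ (resp. $\Lk_{\Delta_1}(F)$) on the common vertex set, so that the faces meeting the internal letters are precisely the new faces of the iterated edge subdivision. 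The only difference is organizational: the paper works with the single complex $\widetilde{\Delta}=(\Delta_1\setminus\Delta_{1,F})\cup\Delta_{2,int}=(\Delta_2\setminus\Delta_{2,G})\cup\Delta_{1,int}$ from Corollary~\ref{subdivident} and proves one dichotomy which it then symmetrizes, whereas you carry out the explicit vertex identifications case by case.
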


Note that if at least one of $\mf_1, \mf_{m_{ij}}$ is not a vertex of $\Delta_1,$ then $F \notin \Delta_1;$ the same holds for $\mg_1, \mg_{m_{ij}}, G$ and $\Delta_2.$

\begin{proof}
We pose $\widetilde{\Delta}$ to be equal to each term in the identities (\ref{widetildedelta}):
$$\widetilde{\Delta} = (\Delta_1 \backslash \Delta_{1,F}) \cup \Delta_{2,int} = (\Delta_2 \backslash \Delta_{2,G}) \cup \Delta_{1,int}.$$
We want to show that if ($B_2$) is satisfied, then $\widetilde{\Delta} = \Delta_1,$ and if ($B_2$) is not satisfied, but ($A_3$) and ($B_3$) hold, then $\widetilde{\Delta}$ is an $(m_{ij} - 2)-$subdivision of $\Delta_1$ along $F.$ With the similar statement concerning $\Delta_2,$ this will prove the theorem.

By Lemma \ref{A2B2}, condition ($B_2$) holds if and only if $\Delta_{1,F} = \emptyset.$ By Lemma \ref{linkflgl} and the definition, this is also equivalent to the identity $\Delta_{2,int} = \emptyset.$ It follows that ($B_2$) holds if and only if $\widetilde{\Delta} = \Delta_1.$ 

Suppose now that ($A_3$) and ($B_3$) hold, but ($B_2$) does not. By Corollary \ref{A3B3edges}, we obtain that $\Lk_{\Delta_2}(\left\{\mg_l, \mg_{l+1}\right\})$ does not contain $\left\{\mg_k\right\},$ for any $k \notin \left\{l, l + 1\right\}.$ Similarly, $\Lk_{\Delta_1}(F)$ does not contain $\left\{\mf_k\right\},$ for any $k \notin \left\{1, m_{ij}\right\}.$ We see that on the vertex set $\mathcal{V},$ we have the following identities (and not just isomorphisms!):
$$\Lk_{\Delta_2}(\left\{\mg_2, \mg_2\right\}) = \Lk_{\Delta_2}(\left\{\mg_2, \mg_3\right\}) =\ldots= \Lk_{\Delta_2}(\{\mg_{(m_{ij} - 1)}, \mg_{m_{ij}}\}) = \Lk_{\Delta_1}(F).$$
It is easy to check now that $(\Delta_1 \backslash \Delta_{1,F})$ is the first term in the definition of $\Sub_{F}^{(m_{ij} - 2)}(\Delta_1),$ while $\Delta_{2,int}$ is the second.
\end{proof}

\begin{theorem} \label{braidstellarmij3}
Assume that $m_{ij} = 3.$ Then the claim of Theorem \ref{braidstellar1} holds, with the same cases (1) - (4). Explicitly, we have the following possibilities:
\begin{itemize}
\item[(1)] if both ($A_2$) and ($B_2$) are satisfied, then $\Delta_1$ and $\Delta_2$ are combinatorially isomorphic.
\item[(2)] if ($A_2$) is not satisfied, but ($B_2$) is, then $\Delta_1$ is the subdivision of $\Delta_2$ along $G;$ 
\item[(3)] if ($A_2$) is satisfied, but ($B_2$) is not, then $\Delta_2$ is the subdivision of $\Delta_1$ along $F;$
\item[(4)] if neither ($A_2$), nor ($B_2$) are satisfied, then there exists a simplicial complex $\widetilde{\Delta}$ which is simultaneously the subdivision of $\Delta_1$ along $F$ and the subdivision of $\Delta_2$ along $G.$
\end{itemize}
\end{theorem}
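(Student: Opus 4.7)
The plan is to follow the proof of Theorem \ref{braidstellar1}, which was structured around the case analysis on $(A_2), (B_2)$, noting that for $m_{ij}=3$ an $(m_{ij}-2)$-subdivision is just an ordinary edge subdivision. Cases (1), (2), (3) go through verbatim: they invoke only Lemma \ref{A2B2}, Lemma \ref{linkflgl}, Lemma \ref{widetildesense} and Corollary \ref{subdivident}, none of which require $m_{ij}>3$; moreover, in each of these cases at least one of $(A_2), (B_2)$ holds, which automatically implies $(A_3)$ and $(B_3)$ by the monotonicity noted after their definitions, so the standing hypothesis of Theorem \ref{braidstellar1} is inherited for free. Concretely, in case (1) the four subcomplexes $\Delta_{1,F}, \Delta_{1,int}, \Delta_{2,G}, \Delta_{2,int}$ are empty and identity (\ref{widetildedelta}) collapses to $\Delta_1=\widetilde{\Delta}=\Delta_2$; in cases (2) and (3) exactly one of $(A_2), (B_2)$ holds, which kills one side of identity (\ref{widetildedelta}) and presents the other as the claimed subdivision.

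The one place where the proof of Theorem \ref{braidstellar1} genuinely used $m_{ij}>3$ is Corollary \ref{A3B3disjoint}, which forces $\{\mf_{l-1},\mf_{l+1}\}\notin\Delta_1$ for internal $l$. For $m_{ij}=3$ the unique internal index is $l=2$ and $\{\mf_{l-1},\mf_{l+1}\}=\{\mf_1,\mf_3\}=F$, which in case (4) is a simplex of $\Delta_1$ by the hypothesis $\neg(A_2)$; thus Corollary \ref{A3B3disjoint} fails and the unions in Lemma \ref{linkflgl} are not disjoint. In case (4) one must assume $(A_3)$ and $(B_3)$ separately, and replace Corollary \ref{A3B3disjoint} with the following observation: $(A_3)$ (respectively $(B_3)$) says that $\MQ\MQ'$ contains no reduced expression of $\pi$, which in turn means that no simplex of the single-letter subword complex $\Delta(\MQ_2^2;\pi)=\Delta(\MQ\ms_j\MQ';\pi)$ uses the middle position $\ms_j$. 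Consequently the three isomorphisms $\psi_F$, $\psi_1$, $\psi_2$ from Lemma \ref{linksubword}, all of which have source $\Delta(\MQ_2^2;\pi)$ and act as the identity on positions coming from $\MQ$ and $\MQ'$, coincide on the common vertex set $\mathcal{V}$, so $\Lk_{\Delta_1}(F)=\Lk_{\Delta_2}(\{\mg_1,\mg_2\})=\Lk_{\Delta_2}(\{\mg_2,\mg_3\})$ as subcomplexes of $\mathcal{V}$. This is exactly the information Corollary \ref{A3B3disjoint} supplied in the $m_{ij}>3$ case, and it permits the same verification as in Theorem \ref{braidstellar1}: $\Delta_1\setminus\Delta_{1,F}$ matches the part $\{\sigma\in\Delta_1 : F\not\subset\sigma\}$ of $\Sub_F(\Delta_1)$, and $\Delta_{2,int}$ matches the three families of new simplices $\sigma\cup\{\mg_2\}$, $\sigma\cup\{\mg_2,\mg_1\}$, $\sigma\cup\{\mg_2,\mg_3\}$ at the new vertex $r=\mg_2$, and symmetrically with $\Delta_2$ and $\mf_2$.

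The main obstacle will be precisely this substitute for Corollary \ref{A3B3disjoint}: since $F$ now lies in $\Delta_1$ and $\mf_2$ is simultaneously a vertex, simplices through $\mf_2$ and through $F$ can interact in ways that did not arise for $m_{ij}>3$, and one must verify carefully that $\Delta_{2,int}$ captures exactly the new simplices of $\Sub_F(\Delta_1)$, neither double-counting nor missing any. The identification of the three links above, enforced by $(A_3)$ and $(B_3)$, is what makes this bookkeeping consistent.
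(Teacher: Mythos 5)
Your handling of cases (1)--(3) is correct and follows the paper's line: in each of these at least one of $(A_2),(B_2)$ holds, hence $(A_3)$ and $(B_3)$ follow by monotonicity, and for $m_{ij}=3$ these are equivalent to the condition $(\ast)$ that $\MQ\MQ'$ contains no reduced expression of $\pi$. Your substitute for Corollary \ref{A3B3disjoint} in this regime --- that $(\ast)$ prevents the middle letter of $\MQ_2^2=\MQ\ms_j\MQ'$ from being a vertex, so that $\psi_F,\psi_1,\psi_2$ agree on $\mathcal{V}$ and $\Lk_{\Delta_1}(F)=\Lk_{\Delta_2}(\{\mg_1,\mg_2\})=\Lk_{\Delta_2}(\{\mg_2,\mg_3\})$ --- is essentially the same observation the paper makes when $(\ast)$ holds, and it does carry the argument of Theorem \ref{braidstellar1} over, including case (4) under $(\ast)$.

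The gap is in case (4) when $(\ast)$ fails. The theorem assumes only $m_{ij}=3$; you are not entitled to ``assume $(A_3)$ and $(B_3)$ separately.'' If $\MQ\MQ'$ does contain a reduced expression of $\pi$, then $(A_2)$ and $(B_2)$ fail automatically, so this situation lands exactly in case (4); it is the one configuration Theorem \ref{braidstellar1} could not reach, and it is needed for Theorems \ref{braidhgamma} and \ref{braid2trunc} (which allow $m_{ij}=3$ with no hypothesis on $(A_3),(B_3)$) and for the simply-laced corollary asserting the result for \emph{any} words and \emph{any} braid move. In that sub-case your argument breaks down: the middle position is then a genuine vertex, $\{\mf_1,\mf_2,\mf_3\}\in\Delta_1$, the three links above differ on $\mathcal{V}$, and $(\Delta_1\setminus\Delta_{1,F})\cup\Delta_{2,int}$ cannot equal $\Sub_F(\Delta_1)$, since the latter contains simplices such as $\mu\cup\{\mf_1,\mf_2,\mg_2\}$ with $\mu\in\Delta(\MQ\MQ';\pi)$, which involve both middle vertices $\mf_2$ and $\mg_2$ and hence lie in neither piece of the union. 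The paper devotes the bulk of its proof to precisely this situation: it introduces the subcomplexes $\Delta'$ and $\Delta''$ supported on the letters of $\MQ\MQ'$, decomposes $\Delta_1$ into $\widetilde{\Delta_1}$, the simplices containing all of $\mf_1,\mf_2,\mf_3$, those of the form $\sigma\cup F$ with $\sigma\in\Delta''$, and those through $\mf_2$ alone, then writes out $\Sub_F(\Delta_1)$ and $\Sub_G(\Delta_2)$ termwise in (\ref{subfdelta1mij3})--(\ref{subgdelta2mij3}) and checks they coincide on $\mathcal{V}$ after the identifications $\mf_1=\mg_3$, $\mf_3=\mg_1$ and $\widetilde{\Delta_1}=\widetilde{\Delta_2}$ (Theorem \ref{widetildeiso}). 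Without an argument of this kind for the sub-case where $(\ast)$ fails, your proposal proves a strictly weaker statement than the theorem.
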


\begin{proof}
In this case, the vertex set $\mathcal{V}$ consists of all letters in $\MQ\MQ',$ two vertices $\mf_2, \mg_2,$ and two vertices $\mf_1 = \mg_3$ and $\mf_3 = \mg_1.$
With $m_{ij} = 3,$ each of conditions ($A_3$) and ($B_3$) is equivalent to the following condition:
\begin{itemize}
\item[($\ast$)] $\MQ\MQ'$ does not contain any reduced expression of $\pi$.
\end{itemize}
Assume that ($\ast$) holds. We consider the same $\widetilde{\Delta}$ as in the proof of Theorem \ref{braidstellar1}. If ($B_2$) holds, by the same arguments, $\widetilde{\Delta} = \Delta_1.$
Assume now that ($B_2$) does not hold (but ($\ast$) still does). Then $\Lk_{\Delta_1}(\left\{\mf_p, \mf_q\right\})$ does not contain $\left\{\mf_r\right\},$ for $\left\{p, q, r\right\} = \left\{1, 2, 3\right\};$ and the same for $\Delta_2.$ In this case, $\widetilde{\Delta}$ is the subdivision of $\Delta_1$ along $F,$ by the same arguments as in the second case in the proof of Theorem \ref{braidstellar1}.

Suppose now that ($\ast$) is not satisfied. In this case, we have to write down a bit longer formulae. The links of the edges which are interesting to us have the next form:
$$\Lk_{\Delta_1}(\left\{\mf_1, \mf_2\right\}) = \left\{\sigma \in \Delta(\MQ\mf_3\MQ'; \pi), \mf_3 \notin \sigma\right\} \sqcup (\Delta(\MQ\MQ'; \pi) * \left\{\mf_3\right\});$$
$$\Lk_{\Delta_1}(\left\{\mf_2, \mf_3\right\}) =  \left\{\sigma \in \Delta(\MQ\mf_1\MQ'; \pi) = \Delta(\MQ\mf_3\MQ'; \pi), \mf_1 \notin \sigma\right\} \sqcup (\Delta(\MQ\MQ'; \pi) * \left\{\mf_1\right\});$$
$$\Lk_{\Delta_1}(F) =  \left\{\sigma \in \Delta(\MQ\mf_2\MQ'; \pi), \mf_2 \notin \sigma\right\} \sqcup (\Delta(\MQ\MQ'; \pi) * \left\{\mf_2\right\}).$$
Note that, on the vertex set $\mathcal{V},$ we have an identity
$$\left\{\sigma \in \Delta(\MQ\mf_3\MQ'; \pi), \mf_3 \notin \sigma\right\} = \left\{\sigma \in \Delta(\MQ\mf_1\MQ'; \pi), \mf_1 \notin \sigma\right\} = \left\{\sigma \in \Delta(\MQ\mg_2\MQ'; \pi), \mg_2 \notin \sigma\right\} =: \Delta',$$
where $\Delta'$ is the full subcomplex of $\Delta(\MQ\ms_j\MQ'; \pi)$ on the vertex $\mathcal{V}_{Q,Q'}$ consisting of all letters in $\MQ\MQ'.$
Similarly, we pose
$$\Delta'' := \left\{\sigma \in \Delta(\MQ\mf_2\MQ'; \pi), \mf_2 \notin \sigma\right\} = \left\{\sigma \in \Delta(\MQ\mg_1\MQ'; \pi), \mg_1 \notin \sigma\right\} = \left\{\sigma \in \Delta(\MQ\mg_3\MQ'; \pi), \mg_3 \notin \sigma\right\}$$
to be the full subcomplex of $\Delta(\MQ\ms_j\MQ'; \pi)$ on the vertex $\mathcal{V}_{Q,Q'}.$
We conclude that
$$\Delta_1 = \widetilde{\Delta_1} \sqcup (\left\{\mu \cup \left\{\mf_1,\mf_2,\mf_3\right\} | \mu \in \Delta(\MQ\MQ'; \pi)\right\}) \sqcup \left\{\sigma \cup F | \sigma \in \Delta''\right\} \sqcup$$
$$\sqcup \left\{\rho \cup \left\{\mf_1, \mf_2\right\}, \rho \cup \left\{\mf_2\right\}, \rho \cup \left\{\mf_2, \mf_3\right\} | \rho \in \Delta' \right\}.$$
Then its subdivision along $F,$ where we take $\mg_2$ as the new vertex, has the following form:

$$\Sub_{F}(\Delta_1) = \widetilde{\Delta_1} \sqcup \left\{\mu \cup \left\{\mf_1,\mf_2,\mg_2\right\},  \mu \cup \left\{\mf_3,\mf_2,\mg_2\right\}| \mu \in \Delta(\MQ\MQ'; \pi)\right\}  \sqcup $$
\begin{equation} \label{subfdelta1mij3}
\sqcup \left\{\sigma \cup \left\{\mf_1, \mg_2\right\}, \sigma \cup \left\{\mg_2\right\}, \sigma \cup \left\{\mg_2, \mf_3\right\} | \sigma \in \Delta''\right\} \sqcup
\end{equation}
$$\sqcup \left\{\rho \cup \left\{\mf_1, \mf_2\right\}, \rho \cup \left\{\mf_2\right\}, \rho \cup \left\{\mf_2, \mf_3\right\} | \rho \in \Delta' \right\}.$$
By similar arguments, we have
$$\Sub_{G}(\Delta_2) = \widetilde{\Delta_2} \sqcup \left\{\mu \cup \left\{\mg_1,\mg_2,\mf_2\right\},  \mu \cup \left\{\mg_3,\mg_2,\mf_2\right\}| \mu \in \Delta(\MQ\MQ'; \pi)\right\}  \sqcup$$
\begin{equation} \label{subgdelta2mij3}
\sqcup \left\{\sigma \cup \left\{\mg_1, \mg_2\right\}, \sigma \cup \left\{\mg_2\right\}, \sigma \cup \left\{\mg_2, \mg_3\right\} | \sigma \in \Delta''\right\} \sqcup
\end{equation}
$$ \sqcup \left\{\rho \cup \left\{\mg_1, \mf_2\right\}, \rho \cup \left\{\mf_2\right\}, \rho \cup \left\{\mf_2, \mg_3\right\} | \rho \in \Delta' \right\}.$$
Recall that we identify in $\mathcal{V}$ the vertices: $\mf_1$ with $\mg_3$ and $\mf_3$ with $\mg_1;$ moreover, on this set, by Theorem \ref{widetildeiso}, we have $\widetilde{\Delta_1} = \widetilde{\Delta_2}.$ Now it is easy to observe that the right hand sides in (\ref{subfdelta1mij3}) and in (\ref{subgdelta2mij3}) are the same; thus, $\Sub_F(\Delta_1) = \Sub_G(\Delta_2).$
\end{proof}

\begin{example}
Let us consider the group $I_2(m),$ for $m \geq 3.$ It is generated by two elements $s_1, s_2,$ modulo the relations $(s_1 s_2)^m = (s_2 s_1)^m = \me.$ Let us consider a pair of words 
$$Q_1 = \ms_1\ms_2\ms_1\ms_2\ms_1\ms_2\ldots, \quad Q_2 = \ms_1\ms_2\ms_2\ms_1\ms_2\ms_1\ldots$$
of length $(m + 2).$ They have the form $Q_1^0$ and $Q_2^0$ for $Q = \ms_1\ms_2, Q' = \me, i = 1, j = 2.$ Let us take $\pi = w_o \in I_2(m).$ Since the words 
$\ms_1\ms_2\ms_1\ms_2\ldots, \quad \mbox{and} \quad \ms_2\ms_1\ms_2\ms_1\ldots$
are the only reduced expressions of $w_o,$ it is easy to check that condition ($A_2$) is not satisfied, but ($B_2$) holds. It follows that $\Delta_1 = \Delta(Q_1, w_o)$ is the edge $(m - 2)-$subdivision of $\Delta_2 = \Delta(Q_2, w_o).$ Explicitly, $\Delta_2$ is the boundary complex of the rectangle: indeed, precisely the first three and the last one of the letters in $Q_2$ provide vertices of $\Delta_2.$ Then $\Delta_1$ is the boundary complex of the $(m + 2)-$gon, and indeed, all the letters in $Q_1$ correspond to vertices of $\Delta_2.$ In fact, $\Delta_1$ is just the $(\ms_1\ms_2)-$cluster complex of type $I_2(m)$ from Example \ref{cluster}.
In this way, we realize all the boundary complexes of $n-$gons, for $n \geq 4,$ as subword complexes. 
\end{example}

\begin{theorem} \label{braidhgamma}
Assume that either conditions ($A_3$) and ($B_3$) are satisfied, or $m_{ij} = 3,$ or all of this holds. Then we have
\begin{itemize}
\item[(i)]
$H(\Delta_2) - H(\Delta_1) = (m_{ij} - 2)\alpha t\left(H(\Delta(\MQ_2^2; \pi)) - H(\Delta(\MQ_1^{2}; \pi))\right);$
\item[(ii)]
If $\Delta_1, \Delta_2$ are spherical, we have
$$\gamma(\Delta_2) - \gamma(\Delta_1) = (m_{ij} - 2)\tau\left(\gamma(\Delta(\MQ_2^{2}; \pi)) - \gamma(\Delta(\MQ_1^{2}; \pi))\right).$$
\end{itemize}
\end{theorem}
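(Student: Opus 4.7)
The strategy is to deduce Theorem \ref{braidhgamma} from Theorems \ref{braidstellar1} and \ref{braidstellarmij3} via Fact \ref{polysubdiv}, applied iteratively.

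First, I promote Fact \ref{polysubdiv} to a $k$-subdivision formula. A direct computation from the definition shows that $\Lk_{\Sub_\eta(X)}(\{r, t\}) = \Lk_X(\eta)$, since any simplex of $\Sub_\eta(X)$ containing both $r$ and $t$ has the form $\sigma \cup \{r, t\}$ for some $\sigma \in \Lk_X(\eta)$. Iterating Fact \ref{polysubdiv}(i) along the successive edges $\{r_{i-1}, t\}$ therefore gives
$$H(\Sub^k_\eta(X))(\alpha, t) = H(X)(\alpha, t) + k \alpha t H(\Lk_X(\eta))(\alpha, t),$$
and analogously for $\gamma$ via Fact \ref{polysubdiv}(ii).

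Second, I identify the relevant links and the vanishing conditions. By Lemma \ref{linksubword} (together with the explicit fact that $\mw_{j,i}^2$ is obtained from $\mw_{i,j}^0$ by removing its first and last letters), the isomorphism $\psi_F$ gives $\Lk_{\Delta_1}(F) \cong \Delta(\MQ_2^2; \pi)$, and symmetrically $\Lk_{\Delta_2}(G) \cong \Delta(\MQ_1^2; \pi)$. In the generalized subdivision appearing in Theorems \ref{braidstellar1} and \ref{braidstellarmij3} (where $F$ or $G$ need not actually be a simplex of the subdivided complex), the formal link is still supplied by the same isomorphism. Moreover, directly from the definition of the subword complex, $(A_2) \Leftrightarrow \Delta(\MQ_1^2; \pi) = \emptyset$ and $(B_2) \Leftrightarrow \Delta(\MQ_2^2; \pi) = \emptyset$; combined with the convention $H(\emptyset) = 0 = \gamma(\emptyset)$, this lets the four cases be treated uniformly.

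Now I run through the case analysis of Theorems \ref{braidstellar1} and \ref{braidstellarmij3}. In case (1), both $\Delta(\MQ_1^2; \pi)$ and $\Delta(\MQ_2^2; \pi)$ vanish and $\Delta_1 \cong \Delta_2$, so both sides of (i) are zero. In case (3), the identity $\Delta_2 = \Sub^{m_{ij}-2}_F(\Delta_1)$ yields $H(\Delta_2) - H(\Delta_1) = (m_{ij}-2)\alpha t H(\Delta(\MQ_2^2; \pi))$, and $(A_2)$ forces $H(\Delta(\MQ_1^2; \pi)) = 0$, giving (i). Case (2) is symmetric with the roles of $\Delta_1$ and $\Delta_2$ swapped. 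In case (4), compute $H(\widetilde{\Delta})$ in two ways from $\widetilde{\Delta} = \Sub^{m_{ij}-2}_F(\Delta_1) = \Sub^{m_{ij}-2}_G(\Delta_2)$ and subtract. Part (ii) is proved verbatim by invoking Fact \ref{polysubdiv}(ii), the spherical hypothesis on $\Delta_1, \Delta_2$ being precisely what is needed to do so. The main delicate point I expect is verifying that Fact \ref{polysubdiv} extends to generalized subdivisions along an edge absent from the complex (cases (2) and (3)), with the formal link replacing the strict link; once that extension is in place, the proof reduces to the bookkeeping sketched above.
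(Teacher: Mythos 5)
Your proof is correct and takes essentially the same route as the paper, whose (one-line) proof likewise deduces the statement from Theorems \ref{braidstellar1} and \ref{braidstellarmij3} together with Fact \ref{polysubdiv} and the isomorphisms $\varphi_G$, $\psi_F$; you simply spell out the iteration of Gal's formula over the $(m_{ij}-2)$ successive edge subdivisions and the case-by-case bookkeeping. The one ``delicate point'' you flag is vacuous: whenever a subdivision actually occurs in cases (2)--(4), the failure of $(B_2)$ (resp.\ $(A_2)$) means precisely that $F\in\Delta_1$ (resp.\ $G\in\Delta_2$), so the edge being subdivided is genuinely present and Fact \ref{polysubdiv} applies verbatim, no ``formal link'' extension being needed.
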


\begin{proof}
By Fact \ref{polysubdiv}, the statement follows from Theorem \ref{braidstellar1} and the existence of the isomorphisms $\varphi_G$ and $\psi_F.$
\end{proof}

\begin{corollary} \label{braidgal}
Assume that ($A_2$) is satisfied. Then we have
\begin{itemize}
\item[(i)] if $\Delta_1$ and $\Delta(\MQ_2^2; \pi)$ are flag, then so is $\Delta_2.$
\item[(ii)] if Conjecture \ref{galconj} holds for $\Delta_1$ and for $\Delta(\MQ_2^2; \pi),$ then it also holds for $\Delta_2.$
\end{itemize}
\end{corollary}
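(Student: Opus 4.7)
The plan is to reduce Corollary \ref{braidgal} to the structural dichotomies of Theorems \ref{braidstellar1} and \ref{braidstellarmij3}, and then apply the flag-preservation Fact from \cite[Proposition~2.4.6]{Ga} for (i) and Theorem \ref{braidhgamma}(ii) for (ii). First I would observe that ($A_2$) implies both ($A_3$) and ($B_3$), by the implications noted right after the definition of those conditions, so the hypotheses of Theorem \ref{braidstellar1} (when $m_{ij} > 3$) or Theorem \ref{braidstellarmij3} (when $m_{ij} = 3$) are met; the case $m_{ij} = 2$ gives $\Delta_1 = \Delta_2$ on $\mathcal{V}$ by the Remark following Corollary \ref{subdivident} and both assertions are immediate. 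Under ($A_2$) cases (2) and (4) of those theorems are ruled out, so either $\Delta_1 \cong \Delta_2$ (case (1)) or $\Delta_2 = \Sub_F^{m_{ij} - 2}(\Delta_1)$ (case (3)); case (1) lets both parts transport along the isomorphism, so I would focus on case (3).

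For part (i) in case (3), the edge $(m_{ij} - 2)$-subdivision is, by definition, a composition of $m_{ij} - 2$ ordinary edge subdivisions. Iterating the first clause of the quoted Fact from \cite[Prop.~2.4.6]{Ga}---that every edge subdivision of a flag complex is flag---and starting from $\Delta_1$, I would deduce by induction on the number of subdivisions that every intermediate complex, and in particular $\Delta_2$, is flag. The auxiliary hypothesis that $\Delta(\MQ_2^2; \pi)$ be flag is in fact automatic here: via the isomorphism $\psi_F \colon \Delta(\MQ_2^2; \pi) \overset\sim\to \Lk_{\Delta_1}(F)$ and the standard fact that links of flag complexes are flag, flagness of $\Delta_1$ immediately delivers flagness of $\Delta(\MQ_2^2; \pi)$.

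For part (ii) in case (3), I would invoke Theorem \ref{braidhgamma}(ii) directly. Since ($A_2$) forces $\MQ_1^2$ to contain no reduced expression of $\pi$, we have $\Delta(\MQ_1^2; \pi) = \emptyset$ and hence $\gamma(\Delta(\MQ_1^2; \pi)) = 0$ by the convention fixed before Conjecture \ref{galconj}. The formula of Theorem \ref{braidhgamma}(ii) collapses to
\[
\gamma(\Delta_2)(\tau) = \gamma(\Delta_1)(\tau) + (m_{ij} - 2)\,\tau\, \gamma(\Delta(\MQ_2^2; \pi))(\tau),
\]
and, since $m_{ij} \geq 2$, the multiplier $(m_{ij} - 2)\tau$ has nonnegative coefficients; combining with the nonnegativity of the coefficients of $\gamma(\Delta_1)$ and $\gamma(\Delta(\MQ_2^2; \pi))$ assumed in the corollary yields nonnegativity of the coefficients of $\gamma(\Delta_2)$, establishing Gal's conjecture for $\Delta_2$. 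The only real obstacle is the bookkeeping in Step~1: checking that ($A_2$) really leaves only cases (1) and (3) of the two structural theorems, and identifying the link whose subdivision is in play with $\Delta(\MQ_2^2; \pi)$; once this is in place, both (i) and (ii) are immediate consequences of the cited Fact and of Theorem \ref{braidhgamma}(ii).
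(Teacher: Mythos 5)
Your proposal is correct and follows essentially the route the paper intends for this corollary: since ($A_2$) implies ($A_3$) and ($B_3$), only cases (1) and (3) of Theorems \ref{braidstellar1}/\ref{braidstellarmij3} can occur (with $m_{ij}=2$ trivial by the remark), so $\Delta_2$ is either isomorphic to $\Delta_1$ or an iterated edge subdivision of it, whence flagness follows by iterating \cite[Proposition~2.4.6]{Ga} and the $\gamma$-statement follows from Theorem \ref{braidhgamma}(ii) with $\gamma(\Delta(\MQ_1^2;\pi))=0$ under ($A_2$). Your extra observation that flagness of $\Delta(\MQ_2^2;\pi)\cong\Lk_{\Delta_1}(F)$ is automatic from flagness of $\Delta_1$ is a correct (if unneeded) strengthening.
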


\begin{corollary} \label{sequence}
Consider any two expressions $\MQ_1, \MQ_2$ of an element $Q \in W,$ related by a sequence of braid moves, each of those satisfies conditions of Theorem \ref{braidstellar1} or of Theorem \ref{braidstellarmij3}. For any element $\pi \in W,$ the subword complex $\Delta(\MQ_2; \pi)$ can be obtained from $\Delta(\MQ_1; \pi)$ by a sequence of edge subdivisions and inverse edge subdivisions.
\end{corollary}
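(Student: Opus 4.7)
The plan is to prove Corollary~\ref{sequence} by a straightforward induction on the length $N$ of the sequence of braid moves connecting $\MQ_1$ to $\MQ_2$. The base case $N = 0$ is trivial since then $\MQ_1 = \MQ_2$ and the two subword complexes coincide, requiring an empty sequence of (inverse) edge subdivisions.

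For the inductive step, suppose the result holds for any pair related by a sequence of at most $N-1$ admissible braid moves, and let $\MQ_1 = \MUU_0, \MUU_1, \ldots, \MUU_N = \MQ_2$ be the intermediate words. By the inductive hypothesis applied to $\MUU_0$ and $\MUU_{N-1}$, the complex $\Delta(\MUU_{N-1}; \pi)$ is obtained from $\Delta(\MQ_1; \pi)$ by a finite sequence of edge subdivisions and inverse edge subdivisions. It thus suffices to show that $\Delta(\MQ_2; \pi) = \Delta(\MUU_N; \pi)$ is obtained from $\Delta(\MUU_{N-1}; \pi)$ by such a sequence, and then to concatenate the two sequences.

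For the single braid move from $\MUU_{N-1}$ to $\MUU_N$, by hypothesis either Theorem~\ref{braidstellar1} or Theorem~\ref{braidstellarmij3} applies. In each theorem we have four possible cases; in case~(1) the complexes are combinatorially isomorphic, so the empty sequence suffices; in case~(2) (resp.~(3)) one complex is a $(m_{ij}-2)$-subdivision of the other along the appropriate edge, hence is obtained from it (resp.\ its inverse) by a composition of $(m_{ij}-2)$ edge subdivisions, as noted immediately after the definition of $\Sub^k_\eta$; in case~(4) there is an intermediate complex $\widetilde{\Delta}$ which is simultaneously a $(m_{ij}-2)$-subdivision of each of $\Delta(\MUU_{N-1}; \pi)$ and $\Delta(\MUU_N; \pi)$, so $\Delta(\MUU_N; \pi)$ is obtained from $\Delta(\MUU_{N-1}; \pi)$ by applying $(m_{ij}-2)$ edge subdivisions (producing $\widetilde{\Delta}$) and then $(m_{ij}-2)$ inverse edge subdivisions.

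There is no real obstacle here: the corollary is essentially a packaging of the single-braid-move results into a statement about sequences. The only mild point to note is that an edge $k$-subdivision is by definition a composition of $k$ single edge subdivisions, so every operation appearing in cases~(2)--(4) decomposes into edge subdivisions and their inverses, giving the desired sequence upon concatenation across all $N$ braid moves.
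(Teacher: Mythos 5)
Your proposal is correct and is essentially the argument the paper intends: the corollary is stated without proof as an immediate consequence of Theorems \ref{braidstellar1} and \ref{braidstellarmij3}, obtained by treating each braid move in the sequence separately (your cases (1)--(4)) and using that an edge $(m_{ij}-2)$-subdivision is a composition of single edge subdivisions. Your induction on the number of braid moves is just a careful way of writing out this concatenation, so nothing differs in substance.
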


We can reformulate the above results in terms of $2-$truncations of polytopes.

\begin{theorem} \label{braid2trunc}
Assume that either conditions ($A_3$) and ($B_3$) are satisfied, or $m_{ij} = 3,$ or all of this holds. Assume that $\Delta_1$ and $\Delta_2$ can be realized as the nerve complexes of polytopes $P_1$ and $P_2,$ respectively. Then we are in one of the following cases:
\begin{itemize}
\item[(1)] if both ($A_2$) and ($B_2$) are satisfied, then $P_1$ and $P_2$ are combinatorially isomorphic.
\item[(2)] if ($A_2$) is not satisfied, but ($B_2$) is, then $P_1$ can be obtained from $P_2$ by a sequence of $(m_{ij} - 2) \quad 2-$truncations at $G;$
\item[(3)] if ($A_2$) is satisfied, but ($B_2$) is not, then $P_2$ can be obtained from $P_1$ by a sequence of $(m_{ij} - 2) \quad 2-$truncations at $F;$
\item[(4)] if neither ($A_2$), nor ($B_2$) are satisfied, then there exists a simple polytope $\widetilde{P}$ which can be obtained simultaneously (a) from $P_1$ by a sequence of $(m_{ij} - 2) \quad 2-$truncations at $F;$ and (b) from $P_2$ by a sequence of $(m_{ij} - 2) \quad 2-$truncations at $G.$
\end{itemize}
\end{theorem}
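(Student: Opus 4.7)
The plan is to dualize Theorems \ref{braidstellar1} and \ref{braidstellarmij3} case by case, using the standard fact recalled in Section 2 that for a simple polytope $P$ with nerve complex $K_P$, an edge subdivision $\Sub_\eta(K_P)$ is dual to a $2$-truncation of $P$ at the face $G \subset P$ dual to the edge $\eta \in K_P$; that this $2$-truncation produces a simple polytope whose nerve complex is precisely $\Sub_\eta(K_P)$; and that iterating this yields the duality between edge $k$-subdivisions and sequences of $k$ consecutive $2$-truncations.

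First I would apply the appropriate simplicial theorem to $\Delta_1$ and $\Delta_2$: under the standing hypothesis that either ($A_3$) and ($B_3$) hold, or $m_{ij}=3$, Theorem \ref{braidstellar1} (resp.\ \ref{braidstellarmij3}) provides the combinatorial isomorphism, or the $(m_{ij}-2)$-subdivision along $F$ or $G$, or the common $(m_{ij}-2)$-subdivision $\widetilde{\Delta}$, according to which of ($A_2$) and ($B_2$) hold. In case (1) the isomorphism of nerve complexes of $P_1$ and $P_2$ immediately gives a combinatorial isomorphism of $P_1$ and $P_2$, since a simple polytope is determined up to combinatorial equivalence by its nerve complex.

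For cases (2) and (3) the argument is symmetric, so I would spell out case (3). Here $\Delta_2 = \Sub_F^{m_{ij}-2}(\Delta_1)$, and recalling that an edge $(m_{ij}-2)$-subdivision is by definition a composition of $m_{ij}-2$ ordinary edge subdivisions along the iteratively defined edges $\{r_{i-1},t\}$, I iterate the duality. Let $J_1 \subset P_1^*$ be the face dual to $F = \eta_1$; truncating $P_1$ at $G_1 := J_1^* \subset P_1$ produces a simple polytope $P^{(1)}$ whose nerve complex is $\Sub_{\eta_1}(\Delta_1)$. The next edge $\eta_2 = \{r_1,t\}$ is an edge of this new nerve complex, dual to a codimension-$2$ face of $P^{(1)}$, and the process repeats. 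After $m_{ij}-2$ such $2$-truncations we obtain a simple polytope whose nerve complex equals $\Sub_F^{m_{ij}-2}(\Delta_1) = \Delta_2$; uniqueness of simple polytopes with a given nerve identifies it with $P_2$. Here each successive truncation is at the facet added in the previous step that contains the ``free'' end of the growing edge path, which is precisely the behavior described after Definition 2.3 for iterated $2$-truncations at $G$.

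Case (4) is handled in the same way applied to both simplicial presentations: Theorems \ref{braidstellar1} and \ref{braidstellarmij3} produce a complex $\widetilde{\Delta}$ with $\widetilde{\Delta} = \Sub_F^{m_{ij}-2}(\Delta_1) = \Sub_G^{m_{ij}-2}(\Delta_2)$, so the same iterative-truncation argument realizes $\widetilde{\Delta}$ as the nerve complex of a simple polytope $\widetilde{P}$ obtained from $P_1$ by $m_{ij}-2$ consecutive $2$-truncations starting at $F$, and simultaneously from $P_2$ by $m_{ij}-2$ consecutive $2$-truncations starting at $G$. The main conceptual obstacle is verifying that at every intermediate step the object produced is indeed (combinatorially) a simple polytope, so that the dual operation is well-defined; this is guaranteed by the existence and uniqueness clause of Definition 2.3 applied inductively.
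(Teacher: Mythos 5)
Your proof is correct and follows exactly the route the paper intends: Theorem \ref{braid2trunc} is stated as the polar-dual reformulation of Theorems \ref{braidstellar1} and \ref{braidstellarmij3}, obtained by iterating the duality between edge subdivisions and $2$-truncations from Section 2 (Definition 2.3 and the discussion following it), which is precisely what you spell out case by case. Your explicit handling of the intermediate steps (each truncation yielding a simple polytope, and uniqueness of the polytope with a given nerve complex) fills in the details the paper leaves implicit, but the argument is essentially the same.
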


\begin{corollary} \label{A2poly}
Assume that ($A_2$) is satisfied. Then we have:
\begin{itemize}
\item[(i)] if $\Delta_1$ can be realized as the nerve complex of a simple polytope $P_1,$ then $\Delta_2$ can be realized as the nerve complex of a simple polytope $P_2;$
\item[(ii)] if $P_1$ is flag, then so is $P_2.$
\end{itemize}
\end{corollary}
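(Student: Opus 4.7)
I would deduce this corollary directly from the case analysis of Theorem \ref{braid2trunc} under the hypothesis ($A_2$). Since the chain of implications preceding Lemma \ref{A2B2} gives that ($A_2$) implies both ($A_3$) and ($B_3$), the hypotheses of Theorem \ref{braid2trunc} are satisfied whenever $m_{ij}\ge 3$; the commuting case $m_{ij}=2$ is handled by the Remark after Corollary \ref{subdivident}, where $\Delta_1=\Delta_2$ on $\mathcal{V}$. Under ($A_2$), only cases (1) and (3) of Theorem \ref{braid2trunc} can occur: either $\Delta_1\cong\Delta_2$, or $\Delta_2$ is the $(m_{ij}-2)$-subdivision of $\Delta_1$ along $F$.

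For part (i), in case (1) it suffices to set $P_2=P_1$. In case (3), starting from the given simple polytope realization $P_1$ of $\Delta_1$, I would build $P_2$ by performing $m_{ij}-2$ successive $2$-truncations as prescribed by Theorem \ref{braid2trunc}(3): the first at the codimension-$2$ face of $P_1$ dual to $F$, and each subsequent truncation at the codimension-$2$ face specified in the bullet list of Section 2 describing how an edge $k$-subdivision decomposes into $k$ consecutive $2$-truncations. Since a $2$-truncation is always defined on a simple polytope and produces a simple polytope, induction on the number of truncations gives that $P_2$ is simple, and the duality between edge subdivisions of the nerve complex and $2$-truncations of the polytope recalled in Section 2 then yields $K_{P_2}=\Sub_F^{m_{ij}-2}(\Delta_1)=\Delta_2$.

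For part (ii), I would apply the preservation of flagness under $2$-truncation (part (ii) of the Fact cited from \cite[Proposition~2.4.6]{Ga}) iteratively to the sequence of truncations constructed in (i): if $P_1$ is flag, then each intermediate polytope is flag, and in particular so is $P_2$. Case (1) is trivial. No genuine obstacle is anticipated: the combinatorial content is entirely packaged in Theorem \ref{braid2trunc}, and the corollary is its polytopal reformulation under the hypothesis ($A_2$), using only the two standard properties of $2$-truncations (that they are defined on any simple polytope and preserve both simplicity and flagness).
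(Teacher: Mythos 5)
Your proposal is correct and follows the route the paper intends: the corollary is exactly the polytopal reading of cases (1) and (3) of Theorems \ref{braidstellar1}/\ref{braidstellarmij3} (equivalently Theorem \ref{braid2trunc}), with $P_2$ produced from $P_1$ by the $(m_{ij}-2)$ successive $2$-truncations dual to the edge subdivision, whose existence and preservation of simplicity and flagness are the facts recalled in Section 2. Your extra care in constructing $P_2$ (rather than assuming its existence, as Theorem \ref{braid2trunc} formally does) and in covering $m_{ij}=2$ via the Remark after Corollary \ref{subdivident} is exactly what is needed and matches the paper's intent.
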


\begin{corollary}
Assume that the group $W$ has a simply-laced Coxeter diagram, i.e. $W$ is a finite free product of groups of types $A_n, n \geq 1; D_n, n \geq 4, E_6, E_7$ or $E_8.$ Then the statements of Theorems \ref{braidstellarmij3}, \ref{braidhgamma}, \ref{braid2trunc}, and Corollaries \ref{braidgal}, \ref{sequence} and \ref{A2poly} hold for any words and any braid move.
\end{corollary}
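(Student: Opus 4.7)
The plan is to reduce the claim, via a short case analysis, to the fact that simply-laced Coxeter diagrams give only two possible values of $m_{ij}$. By the classification of finite Coxeter groups, if $W$ has a simply-laced diagram then for every pair of simple reflections $s_i,s_j$ one has $m_{ij}\in\{2,3\}$. So an arbitrary braid move in $W$ falls into exactly one of the following two types, and I would treat them separately.

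First I would dispose of the case $m_{ij}=2$. Here the ``braid move'' is just a commutation $s_is_j\leftrightarrow s_js_i$, and the Remark following Corollary \ref{subdivident} already tells us that $\Delta_{1,int}=\Delta_{2,int}=\emptyset$, $\Delta_{1,F}=\Delta_{2,G}$, and $\Delta_1=\Delta_2$ on the common vertex set $\mathcal{V}$. This gives case (1) of Theorem \ref{braidstellarmij3} (no subdivision is needed) and, via polar duality, case (1) of Theorem \ref{braid2trunc}. In Theorem \ref{braidhgamma} the prefactor $(m_{ij}-2)$ on both right-hand sides is $0$, so the polynomial identities become the trivial equalities $H(\Delta_2)-H(\Delta_1)=0$ and $\gamma(\Delta_2)-\gamma(\Delta_1)=0$, which are consistent with $\Delta_1\cong\Delta_2$. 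Corollaries \ref{braidgal}, \ref{sequence} and \ref{A2poly} are then immediate in this case since everything is preserved under the move.

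Next I would handle $m_{ij}=3$. The entire hypothesis of Theorem \ref{braidstellarmij3} is already $m_{ij}=3$, so the theorem applies verbatim to any such braid move in $W$, yielding the four-case description of the relationship between $\Delta_1$ and $\Delta_2$. For Theorem \ref{braidhgamma} I would simply note that its assumptions allow ``or $m_{ij}=3$'' as a stand-alone hypothesis, so the $H$- and $\gamma$-polynomial identities (with the prefactor $(m_{ij}-2)=1$) hold automatically. Theorem \ref{braid2trunc} then follows by polar duality whenever $\Delta_1,\Delta_2$ are realized by polytopes. Corollaries \ref{braidgal}, \ref{sequence} and \ref{A2poly} are formal consequences of these theorems together with Fact \ref{polysubdiv} and Fact~2.6 on flagness, so they require no extra input.

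Since every braid move in $W$ is of one of these two types, the two cases together cover ``any words and any braid move'', completing the plan. No step here is a genuine obstacle; the only thing to be careful about is to record explicitly that Theorem \ref{braidstellarmij3} and the $m_{ij}=3$ branch of Theorem \ref{braidhgamma} have $m_{ij}=3$ as their \emph{only} hypothesis (in particular no auxiliary ($A_k$) or ($B_k$) is required), and that the $m_{ij}=2$ case is genuinely trivial rather than vacuous, so that Corollary \ref{sequence} can be invoked for arbitrary sequences of braid moves connecting any two reduced (or non-reduced) expressions of the same element.
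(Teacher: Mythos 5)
Your proposal is correct and follows essentially the argument the paper intends (the corollary is stated without an explicit proof precisely because it reduces to the observation that simply-laced means $m_{ij}\in\{2,3\}$): the $m_{ij}=3$ case is covered verbatim by Theorem \ref{braidstellarmij3} and the $m_{ij}=3$ branch of Theorem \ref{braidhgamma}, while the $m_{ij}=2$ commutation case is trivial by the Remark after Corollary \ref{subdivident}, with the prefactor $(m_{ij}-2)=0$ making the polynomial identities vacuously true. Your explicit care about the commutation case in Corollary \ref{sequence} is a sensible addition, not a deviation.
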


\begin{example}
Let us consider the group $A_3.$ We will describe a sequence of braid moves and corresponding transformations of the polytopes dual to corresponding subword complexes. Here the group element $\pi$ whose reduced expressions we count is always taken to be $w_o.$ We have:
$$\xymatrix@R=0.3cm{*+++{\underline{\ms_1}\underline{\ms_2}\underline{\ms_3}\underline{\ms_3}\ms_2\ms_1\underline{\ms_3}\ms_2\underline{\ms_3}} \ar@{->}[d] && *+++{I^3} \\
*+++{\underline{\ms_1\ms_2\ms_3\ms_3}\ms_2\underline{\ms_3}\ms_1\ms_2\underline{\ms_3}} \ar@{->}[d] && *+++{I^3} \\
*+++{\underline{\ms_1\ms_2\ms_3\ms_2\ms_3\ms_2}\ms_1\ms_2\underline{\ms_3}} \ar@{->}[d] && *+++{I\times As^2} \\
*+++{\underline{\ms_1\ms_2\ms_3\ms_2\ms_3}\ms_1\ms_2\underline{\ms_1\ms_3}} \ar@{->}[d] && *+++{I\times As^2} \\
*+++{\underline{\ms_1\ms_2\ms_3\ms_2}\ms_1\underline{\ms_3}\ms_2\underline{\ms_3\ms_1}} \ar@{->}[d] && *+++{I\times As^2} \\
*+++{\underline{\ms_1\ms_2\ms_3\ms_2}\ms_1\underline{\ms_2\ms_3\ms_2\ms_1}} \ar@{->}[d] && *+++{P^3} \\
*+++{\underline{\ms_1\ms_2\ms_3\ms_1\ms_2\ms_1\ms_3\ms_2\ms_1}} \ar@{->}[d] && *+++{As^3} \\
*+++{\underline{\ms_1\ms_2\ms_3\ms_1\ms_2\ms_3\ms_1\ms_2\ms_1}}  && *+++{As^3.} \\
}$$
In each row, on the right, we give the polytope dual to the complex $\Delta(\MQ, w_o),$ where $\MQ$ is the word on the left. The underlined letters are those who provide vertices of $\Delta(\MQ, w_o)$ and facets of this polytope. Here $As^n$ is the $n-$dimensional associahedron; in particular, $As^2$ is the pentagon. $P^3$ is the polytope obtained from the product $I\times As^2$ by the $2-$truncation at an edge belonging to one of the bases. The last word and the last $As^3$ come from Example \ref{cluster}.
\end{example}

Let us now consider the words of the form $\MQ \mpp \MQ',$ where $\MQ, \MQ'$ are arbitrary words in $S^*, \mpp$ is a reduced expression of a fixed element $\pi$ of the group $W.$ By Fact \ref{wordproperty}, any two reduced expressions $\mpp, \mpp'$ of $\pi$ are related by a sequence of braid moves; therefore, any two words $\MQ \mpp \MQ', \MQ \mpp' \MQ'$ of this form are related by a sequence of braid moves. Thus, by Corollary \ref{sequence}, under certain conditions, the subword complexes $\Delta(\MQ \mpp \MQ'; \pi)$ and $\Delta(\MQ \mpp' \MQ'; \pi)$ are related to each other by a sequence of edge subdivisions and inverse edge subdivisions. Let $\mathcal{P}$ be a set of all reduced expressions of $\pi.$

\begin{definition}
For any words $\MQ, \MQ' \in S^*,$ we define a partial order $\rho_{\MQ,\MQ',\pi}$ on $\mathcal{P}$ by the following rule: for $\mpp, \mpp'\in \mathcal{P},$ we have $\rho_{\MQ,\MQ',\pi}(\mpp, \mpp')$ if $\Delta(\MQ \mpp \MQ'; \pi)$ can be obtained (up to combinatorial isomorphism) from $\Delta(\MQ \mpp' \MQ'; \pi)$ by a sequence of edge subdivisions.
\end{definition}

\begin{problem}
Can we characterize all the triples $(\MQ, \MQ', \pi),$ such that the order $\rho_{\MQ,\MQ',\pi}$ is a join- or meet-semilattice? 
\end{problem}

In the upcoming sequel of this work \cite{Go2}, we shall discuss properties of the order $\rho_{\mc,\me,w_o},$ for $\mc$ a reduced expression of a Coxeter element in $W.$

\end{document}